\documentclass[12pt]{amsart}
\usepackage[margin=1.25in]{geometry}
\usepackage{latexsym}
\usepackage{amsmath}
\usepackage{amssymb}
\usepackage{amsthm}
\usepackage{stmaryrd}
\usepackage{amscd}
\usepackage{enumerate} 
\usepackage{amssymb} 
\usepackage{mathrsfs}
\usepackage[all]{xy}
\usepackage{color}
\usepackage{graphicx}
\usepackage{mathtools}
\usepackage{comment}
\usepackage{multirow}
\usepackage[pagebackref]{hyperref}
\hypersetup{colorlinks=true}

\makeatletter
  
  \@addtoreset{equation}{thm}
  \makeatother

\title{Endomorphisms of varieties
and Bott vanishing}
\author{Tatsuro Kawakami \and Burt Totaro}
\email{tatsurokawakami0@gmail.com}
\address{Department of Mathematics, Graduate School of Science, Kyoto University, Kyoto 606-8502, Japan}
\email{totaro@math.ucla.edu}
\address{UCLA Mathematics Department, Box 951555, Los Angeles,
CA, 90095-1555, U.S.}

\def\phi{\varphi}
\def\epsilon{\varepsilon}
\def\tilde{\widetilde}

\def\sing{\text{sing}}

\def\Spec{\operatorname{Spec}}

\def\Supp{\operatorname{Supp}}

\def\Pic{\operatorname{Pic}}

\def\Im{\operatorname{im}}
\def\Ker{\operatorname{ker}}

\def\rank{\operatorname{rank}}

\def\Cl{\operatorname{Cl}}
\def\Gr{\operatorname{Gr}}
\def\LGr{\operatorname{LGr}}
\def\OGr{\operatorname{OGr}}

\newcommand{\Q}{\mathbb{Q}}

\newcommand{\Z}{\mathbb{Z}}
\newcommand{\PP}{\mathbb{P}}
\newcommand{\FF}{\mathbb{F}}

\newcommand{\sO}{\mathcal{O}}
\newcommand{\sHom}{\mathcal{H}\! \mathit{om}}

\theoremstyle{plain}
\newtheorem{thm}{Theorem}[section] 

\newtheorem{prop}[thm]{Proposition}
\newtheorem{conj}[thm]{Conjecture}

\newtheorem{lem}[thm]{Lemma}
\theoremstyle{definition} 
\newtheorem{defn}[thm]{Definition}
\newtheorem{conv}[thm]{Convention}

\theoremstyle{remark}
\newtheorem{rem}[thm]{Remark}
\newtheorem{ques}[thm]{Question}

\newtheorem{defn and notation}[thm]{Definition and Notation}

\theoremstyle{plain}
\newtheorem{theo}{Theorem}

\keywords{Endomorphisms; Bott vanishing; Fano varieties.}
\subjclass[2020]{14F17,14J45,08A35}

\baselineskip = 15pt
\footskip = 32pt

\begin{document}
\tolerance = 9999

\begin{abstract}
We show that a projective variety with an int-amplified
endomorphism of degree invertible in the base field
satisfies Bott vanishing. This is a new way
to analyze which varieties have nontrivial endomorphisms.
In particular, we extend some classification
results on varieties admitting endomorphisms
(for Fano threefolds
of Picard number one and several other cases)
to any characteristic. The classification results
in characteristic zero are due to
Amerik--Rovinsky--Van de Ven, Hwang--Mok, Paranjape--Srinivas,
Beauville, and Shao--Zhong. Our method also bounds
the degree of morphisms into a given variety.
Finally, we relate
endomorphisms to global $F$-regularity.
\end{abstract}

\maketitle
\markboth{Tatsuro Kawakami and Burt Totaro}{Endomorphisms of varieties and Bott vanishing}


\section{Introduction}

There is a long-standing conjecture about smooth Fano varieties admitting non-invertible surjective endomorphisms.

\begin{conj}\label{Introconj}
Let $X$ be a smooth Fano variety of Picard number 1
over an algebraically closed field of characteristic zero.
Suppose that $X$ admits a non-invertible surjective endomorphism. Then $X$ is isomorphic to projective space.
\end{conj}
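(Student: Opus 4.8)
The plan is to reduce the conjecture to the classification of Fano varieties of Picard number one satisfying Bott vanishing, using the paper's main theorem as the bridge. First I would check that a non-invertible surjective endomorphism $f\colon X\to X$ of a Fano variety of Picard number one is automatically int-amplified. Since $\Pic(X)\otimes\Q\cong\Q$, the pullback $f^*$ acts on it as multiplication by some $q\in\Q$; because $f$ is surjective, $f^*H$ is a positive multiple of the ample generator $H$ and hence ample, so $f$ is finite, $q>0$, and $\deg f=q^{\dim X}$. As $f$ is not an isomorphism, $\deg f>1$, forcing $q>1$, and then $f^*H-H\equiv(q-1)H$ is ample. Thus $f$ is int-amplified; since the base field has characteristic zero its degree is automatically invertible, and the main theorem gives that $X$ satisfies Bott vanishing.

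It then remains to show that the only Fano variety of Picard number one that both satisfies Bott vanishing and admits a non-invertible endomorphism is $\PP^n$. Projective space is known to satisfy Bott vanishing (Bott's formula), so the task is to rule out every other candidate. I would attack this in two steps: (i) constrain, and in low dimension fully classify, the smooth Fano varieties of Picard number one having Bott vanishing; and (ii) show that each such variety other than $\PP^n$ carries no non-invertible endomorphism. For (ii) a promising handle is the ramification divisor $R$, defined by $K_X = f^*K_X + R$: since $f^*K_X\equiv qK_X$ we get $R\equiv(1-q)K_X=(q-1)(-K_X)$, an effective and ample divisor, a strong positivity constraint that one can play against the geometry of the candidate.

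The step I expect to be the main obstacle is the classification in (i). Bott vanishing is very restrictive, but it is not known to characterize projective space among Fano varieties of Picard number one, so it cannot close the conjecture by itself; additional input is indispensable. In dimension three one can substitute the explicit Mori--Mukai classification of Fano threefolds and verify Bott vanishing case by case, which is what yields the threefold statement advertised in the abstract, but in arbitrary dimension no such list exists, and this is precisely why the conjecture remains open. To go beyond low dimensions I would try to feed Bott vanishing into the variety-of-minimal-rational-tangents method of Hwang--Mok, or into Mori-type characterizations of $\PP^n$ through positivity of the tangent bundle, exploiting the differential $df\colon T_X\to f^*T_X$ attached to the endomorphism. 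The genuine difficulty is one of translation: Bott vanishing governs the cohomology of twisted differential forms $\Omega^p_X\otimes L$, whereas the characterizations of $\PP^n$ demand global positivity of $T_X$, and bridging these two is the crux of any complete proof.
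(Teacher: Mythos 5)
You were asked to prove a statement that the paper itself labels a \emph{conjecture} and does not prove; to your credit, you say explicitly that it remains open, and your outline faithfully reproduces the paper's actual strategy for the cases it does settle. Your first step is correct and is exactly the paper's Remark 1.5 combined with Theorem \ref{Introtheo:Bott}: on a variety of Picard number 1, a non-invertible surjective endomorphism satisfies $f^*H\equiv qH$ with $q>0$, so $f^*H$ is ample and $f$ is finite; then $\deg f = q^{\dim X}>1$ forces $q>1$, so $f$ is polarized, hence int-amplified, and in characteristic zero the degree is automatically invertible, so $X$ satisfies Bott vanishing. Your step (i) is the content of the paper's Propositions \ref{prop:rank one}, \ref{prop:index two}, and \ref{prop:Bott on hypersurface}: in dimension 3, rigidity ($H^1(X,TX)=0$) plus a Riemann--Roch count against the Mori--Mukai tables reduces to $V_5$, $Q$, $\PP^3$, and the first two are excluded by known failures of Bott vanishing; analogous arguments handle fourfolds of index greater than 1 and hypersurfaces. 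You also correctly identify the genuine obstruction: it is not known that Bott vanishing plus Picard number 1 characterizes $\PP^n$ in arbitrary dimension, and the paper claims no such thing, which is precisely why the conjecture stays open.

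One structural criticism: your step (ii) is redundant within this framework and, as formulated, cannot carry weight. Once the endomorphism has been converted into Bott vanishing, it has been fully spent; if the classification in (i) leaves a candidate $X\not\cong\PP^n$, then excluding endomorphisms on $X$ requires an obstruction \emph{independent} of Bott vanishing, since such an $X$ is by construction consistent with everything the reduction extracts from $f$. In particular, the ramification identity $R\equiv (q-1)(-K_X)$, with $R$ effective and ample, holds on $\PP^n$ itself, so it excludes nothing on its own; it is only the entry point for the VMRT-type arguments of Hwang--Mok and the analysis of Amerik--Rovinsky--Van de Ven that settle dimension 3 in characteristic zero by other means. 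So read your (i) and (ii) as alternative routes rather than as two steps of one proof: either Bott vanishing alone classifies (the paper's route, available only in the cases of Theorem \ref{Introthm:rank one}), or one must return to the endomorphism directly with different tools.
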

Conjecture \ref{Introconj} has been proved when 
\begin{itemize}
       \item[\textup{(1)}] $\dim\,X=3$ \cite{Amerik-maps,ARV,Jun-Muk--Mok},
       \item[\textup{(2)}] $\dim\,X=4$ and $X$ has Fano index greater than 1 \cite{Shao-Zhong},
       \item[\textup{(3)}] $X$ is a hypersurface \cite{Paranjape-Srinivas,Beauville}, or
       \item[\textup{(4)}] $X$ is a homogeneous space
       \cite{Paranjape-Srinivas}.
\end{itemize}

The aim of this paper is to give a new approach to this problem and to generalize cases (1), (2), and (3) above
to arbitrary characteristic.

\begin{theo}\label{Introthm:rank one}
   Let $X$ be a smooth projective variety over an algebraically closed field $k$. Assume that $X$
   admits an endomorphism whose degree is greater
   than 1 and invertible in $k$.
   Suppose that one of the following holds.
   \begin{itemize}
       \item[\textup{(1)}] $X$ is a smooth Fano threefold of Picard number 1.
       \item[\textup{(2)}] $X$ is a smooth Fano fourfold
       of Picard number 1 and Fano index greater than 1.
       \item[\textup{(3)}] $X$ is a hypersurface
       of dimension at least 3.
   \end{itemize}
   Then $X$ is isomorphic to projective space.
\end{theo}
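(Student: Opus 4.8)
The plan is to derive the theorem from the paper's main result---that an int-amplified endomorphism of degree invertible in $k$ forces Bott vanishing---together with a classification of which varieties in classes (1)--(3) satisfy Bott vanishing. First I would observe that in each case $X$ has Picard number one: this is assumed in (1) and (2), and in (3) it follows from the Lefschetz hyperplane theorem, since $\dim X \geq 3$. For a smooth projective variety of Picard number one, let $H$ generate the group of line bundles modulo numerical equivalence, with $H$ ample. A surjective endomorphism $f$ satisfies $f^*H \equiv qH$ for some positive rational $q$, and comparing top self-intersections gives $(f^*H)^{\dim X} = (\deg f)\, H^{\dim X}$, hence $q^{\dim X} = \deg f$. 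Thus $\deg f > 1$ forces $q > 1$, so $f$ is polarized and in particular int-amplified. Since $\deg f$ is invertible in $k$, the main theorem applies and $X$ satisfies Bott vanishing: $H^j(X, \Omega^i_X \otimes L) = 0$ for all $i \geq 0$, all $j > 0$, and all ample line bundles $L$.

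It then suffices to prove the purely cohomological statement that, among the varieties in (1)--(3), only projective space satisfies Bott vanishing; for every $X \not\cong \PP^n$ I would exhibit a triple $(i, j, L)$ with $j > 0$, $L$ ample, and $H^j(X, \Omega^i_X \otimes L) \neq 0$. The hypersurface case (3) I would handle uniformly and explicitly. For $X = X_d \subset \PP^{n+1}$ smooth of degree $d \geq 2$ and dimension $n \geq 3$, the conormal sequence $0 \to \sO_X(-d) \to \Omega^1_{\PP^{n+1}}|_X \to \Omega^1_X \to 0$ and the exterior powers of the restricted Euler sequence express the bundles $\Omega^i_X$ through bundles restricted from $\PP^{n+1}$ and twists of $\sO_X$; combined with the Koszul resolution $0 \to \sO_{\PP^{n+1}}(-d) \to \sO_{\PP^{n+1}} \to \sO_X \to 0$, this reduces the groups $H^\bullet(X, \Omega^i_X(a))$ to Bott's formula on $\PP^{n+1}$. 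A small positive twist of $\Omega^1_X$ is a convenient test case, where the connecting maps leave a nonzero group precisely because $d \geq 2$; this forces $d = 1$, i.e. $X \cong \PP^n$.

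For the Fano threefolds (1) and fourfolds (2) I would invoke the relevant classifications---Iskovskikh's list of Picard-number-one Fano threefolds (index $4$: $\PP^3$; index $3$: the quadric $Q^3$; index $2$: the del Pezzo threefolds of degrees $1$ through $5$; index $1$: the prime Fano threefolds), and the corresponding list of index-$\geq 2$ Picard-number-one Fano fourfolds---and check that each member other than $\PP^n$ fails Bott vanishing. This case analysis is where the main difficulty lies. For the homogeneous members, such as the quadrics and the other rational homogeneous spaces, one can locate a nonvanishing $H^j(X, \Omega^i_X(a))$ by Bott--Borel--Weil in characteristic zero; the real issue is to make the nonvanishing hold in every characteristic, since $k$ is arbitrary and Bott--Borel--Weil can fail in small characteristic. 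Here I would aim to select cohomology classes coming from a fixed integral model that survive reduction modulo every prime, or otherwise control the finitely many bad characteristics by hand. The remaining del Pezzo and prime Fano members carry no group action, so for those one needs explicit obstruction or section computations; assembling all of these into a single characteristic-free verification is the crux of the proof.
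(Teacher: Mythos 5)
Your reduction to Bott vanishing is exactly the paper's route: Picard number one (for hypersurfaces of dimension $\geq 3$, via Grothendieck--Lefschetz, valid in any characteristic) makes any endomorphism of degree $>1$ polarized, hence int-amplified, and Theorem \ref{Introtheo:Bott} then applies. Your treatment of case (3) is also in substance the paper's and is fine: the paper disposes of degree $\geq 3$ Fano hypersurfaces by rigidity ($H^1(X,TX)=H^1(X,\Omega^{d-1}_X(-K_X))=0$), of quadrics of dimension $\geq 3$ by a known computation (equivalently $\chi(Q,TQ(-2))=-1$, which is characteristic-free), and of non-Fano hypersurfaces by exactly the Euler-sequence/conormal-sequence computation you sketch, yielding $H^{d-1}(X,\Omega^1_X(n))\neq 0$; all of this works over any field.

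The genuine gap is in cases (1) and (2) in characteristic $p>0$: you propose to ``invoke the relevant classifications,'' but Iskovskikh's list of Picard-rank-one Fano threefolds (and the Fujita--Mukai list of index-$\geq 2$ fourfolds) is established only in characteristic zero, and it is open whether the classification has the same form in positive characteristic (the paper says this explicitly in Remark \ref{rem:Bott by Totaro}). Your fallback---nonvanishing classes on integral models surviving reduction---is sound as far as it goes (by semicontinuity the special fiber has $h^i$ at least that of the generic fiber), but it only applies to an $X$ already known to be the reduction of a characteristic-zero list member, which is precisely what is unknown; so the plan does not close. The paper's key extra idea runs the specialization the other way: Bott vanishing for $X$ itself gives $H^2(X,TX)=H^2(X,\Omega^{d-1}_X(-K_X))=0$ and $H^i(X,\sO_X)=0$ for $i>0$, so deformations are unobstructed and $X$ lifts to $W(k)$, with every line bundle lifting since $H^2(X,\sO_X)=0$; by semicontinuity (using $\rho(X)=1$ to see that the relevant specialized divisor is ample) the geometric generic fiber again has Picard number one and satisfies Bott vanishing, so the characteristic-zero argument applies to the lift---where, incidentally, the paper avoids a member-by-member check via $0\leq \chi(X,TX)=\tfrac{1}{2}(-K_X)^3-18+\rho(X)-h^1(X,\Omega^2_X)$ for threefolds, and $\chi(X,TX(-1))=-2$ computations (via Snow's vanishing on $\Gr(2,5)$ and $\LGr(3,6)$) for the two non-quadric fourfolds. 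One then descends: the Fano index is preserved under lifting, so $r(X)=\dim X+1$, and Kachi--Koll\'ar's characteristic-free criterion yields $X\cong \PP^{\dim X}$ over $k$. Without some version of this lifting step, your argument for (1) and (2) cannot be completed in positive characteristic.
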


Our method also gives information on morphisms
other than endomorphisms. The following
result was known in characteristic zero
in cases (1) and (2) \cite{Amerik-maps}, \cite[Theorem 0.2]{ARV},
\cite[Theorem 2]{Jun-Muk--Mok}, 
\cite[Theorem 1.5]{Shao-Zhong},
and for quadrics in characteristic zero
in case (3)
\cite[Theorem]{Amerik-quadric}. Our proof
is short and valid in arbitrary characteristic.

\begin{theo}\label{Introthm:degree}
Let $X$ be one of the varieties in Theorem
\ref{Introthm:rank one}. Let $Y$ be a smooth projective
variety over $k$
of the same dimension that also has Picard number 1.
If $X$ is not isomorphic to projective space,
then there is an upper bound on the degrees
of all morphisms $Y\to X$ that have degree
invertible in $k$.
\end{theo}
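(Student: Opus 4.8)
The plan is to extract from the failure of Bott vanishing on $X$ a nonzero cohomology class, transport it to $Y$ by the trace map attached to $f$, and then force a contradiction with Serre vanishing on the \emph{fixed} variety $Y$ once $\deg f$ is large. Since $X$ has Picard number $1$, fix ample generators $H_X$ of $\Pic(X)$ and $H_Y$ of $\Pic(Y)$, and write $n=\dim X=\dim Y$. Any morphism $f\colon Y\to X$ satisfies $f^*H_X=m_fH_Y$ for a positive integer $m_f$, and $\deg f=m_f^{\,n}\,(H_Y^n)/(H_X^n)$, so it suffices to bound $m_f$ over all $f$ of degree invertible in $k$. Writing $K_X=-rH_X$ in the Fano cases (resp.\ $K_X=(\delta-n-2)H_X$ for a degree-$\delta$ hypersurface) and $K_Y=k_YH_Y$, the ramification formula $K_Y=f^*K_X+R_f$ exhibits an effective ramification divisor $R_f$ whose class is $(k_Y+rm_f)H_Y$; in particular $R_f$ grows linearly in $m_f$.

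First I would record that, since $X\not\cong\PP^n$, the variety $X$ \emph{fails} Bott vanishing. This is precisely the input behind Theorem \ref{Introthm:rank one}: our main theorem shows that an int-amplified endomorphism of degree invertible in $k$ forces Bott vanishing, and the classification of Bott vanishing among the varieties in (1)--(3) singles out $\PP^n$; reading this backwards, $X\not\cong\PP^n$ yields a nonzero class $0\neq\alpha\in H^j(X,\Omega^i_X\otimes L)$ with $j>0$ and $L=H_X^{\otimes a}$ ample. Because $\deg f$ is invertible in $k$, the morphism $f$ is finite, flat and generically \'etale, so $\tfrac{1}{\deg f}\operatorname{Tr}$ splits the inclusion $\sO_X\hookrightarrow f_*\sO_Y$. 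Tensoring with $\Omega^i_X\otimes L$, using the projection formula and $R^{>0}f_*=0$, I obtain a \emph{split} injection $f^*\colon H^j(X,\Omega^i_X\otimes L)\hookrightarrow H^j\bigl(Y,f^*(\Omega^i_X\otimes L)\bigr)$, so $f^*\alpha\neq0$.

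Next I would compare $f^*\Omega^i_X$ with $\Omega^i_Y$. As $f$ is generically \'etale, the canonical map $f^*\Omega^i_X\to\Omega^i_Y$ is injective with cokernel $Q_i$ supported on $R_f$, giving a short exact sequence $0\to f^*\Omega^i_X\otimes f^*L\to\Omega^i_Y\otimes f^*L\to Q_i\otimes f^*L\to0$. The endgame is then clean: \emph{if} $H^{j-1}(Y,Q_i\otimes f^*L)=0$, the connecting map vanishes, so the induced map $\iota_*\colon H^j\bigl(Y,f^*(\Omega^i_X\otimes L)\bigr)\to H^j(Y,\Omega^i_Y\otimes f^*L)$ is injective and $\iota_*(f^*\alpha)\neq0$. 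Since $f^*L=H_Y^{\otimes am_f}$, this produces a nonzero class in $H^j(Y,\Omega^i_Y\otimes H_Y^{\otimes am_f})$ with $j>0$. But $\Omega^i_Y$ is a fixed bundle on the fixed variety $Y$, so Serre vanishing gives a threshold $t_0=t_0(Y,i)$, independent of $f$, with $H^{>0}(Y,\Omega^i_Y\otimes H_Y^{\otimes t})=0$ for all $t\geq t_0$. Hence $am_f<t_0$, bounding $m_f$ and therefore $\deg f$.

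The crux, and what I expect to be the main obstacle, is exactly the vanishing $H^{j-1}(Y,Q_i\otimes f^*L)=0$ for $m_f\gg0$, where $Q_i$ lives on the unbounded family of ramification divisors $R_f\sim rm_fH_Y$. The mechanism should be a competition between two linear growths: the ramification contributes $\sim rm_f$ while the Bott twist contributes $am_f$. For the model sheaf $\sO_{R_f}$ the structure sequence $0\to\sO_Y(-R_f)\otimes f^*L\to f^*L\to\sO_{R_f}\otimes f^*L\to0$, together with Serre duality and Serre vanishing on $Y$, forces $H^{p}(Y,\sO_{R_f}\otimes f^*L)=0$ in the middle range $1\leq p\leq n-2$ for $m_f\gg0$ whenever the ramification outweighs the twist; with $p=j-1$ this covers $2\leq j\leq n-1$. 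The real work is to propagate such a bound from $\sO_{R_f}$ to the genuine quotient $Q_i$ (for instance via a finite filtration of $Q_i$ by sheaves on subschemes of $R_f$, or a Castelnuovo--Mumford regularity estimate that scales with $m_f$), to treat the edge case $j=1$, and to organize the sign regimes of the comparison (Fano versus Calabi--Yau versus general type in the hypersurface case, and the possibility that Bott vanishing fails only for twists with $a\geq r$). Establishing this uniform, effective vanishing along the family $\{R_f\}$ is what ultimately converts the failure of Bott vanishing on $X$ into the desired bound on $\deg f$.
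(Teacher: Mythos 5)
Your global strategy is the right one, and it is the same as the paper's: argue contrapositively that unbounded degrees (with Picard number $1$ on both sides, so $f^*A$ runs up a single ray) would force Bott vanishing on $X$, then invoke the classification results (Propositions \ref{prop:Bott on hypersurface}, \ref{prop:rank one}, \ref{prop:index two}) to conclude $X\cong\PP^n$; your reduction to bounding $m_f$, and your endgame of Serre/Fujita vanishing for $\Omega^i_Y$ twisted by growing multiples of $H_Y$ on the \emph{fixed} variety $Y$, all match the proof of Proposition \ref{prop:degree bound}. But the middle of your argument has a genuine gap, and you correctly flag it yourself: everything hinges on the uniform vanishing $H^{j-1}(Y,Q_i\otimes f^*L)=0$ for the cokernel $Q_i$ of $f^*\Omega^i_X\to\Omega^i_Y$, a sheaf supported on the ramification divisor $R_f$ whose class grows linearly in $m_f$. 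This is not a technical loose end one can expect to tighten: in the case $j=1$ — which is unavoidable, since the nonvanishing classes actually used include $H^1(X,TX)=H^1(X,\Omega^{d-1}_X(-K_X))\neq 0$ for the non-rigid Fanos — you would need $H^0(Y,Q_i\otimes f^*L)=0$, and $H^0$ of a nonzero sheaf on $R_f$ twisted by the large ample bundle $f^*L$ is nonzero whenever $f$ ramifies at all. So the proposed mechanism fails outright exactly where it is most needed, and no filtration or regularity estimate on $Q_i$ will rescue it.

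The missing idea, which dissolves the difficulty entirely, is that the trace map exists at the level of differential forms, not just for $\sO$: for $f$ finite with $X$ smooth (or normal, using reflexive forms) there is a trace $\tau_f\colon f_*\Omega^j_Y\to\Omega^j_X$ of Garel and Kunz whose composition with the pullback $\Omega^j_X\to f_*\Omega^j_Y$ is multiplication by $\deg(f)$. Since $\deg(f)$ is invertible in $k$, this gives a \emph{split} injection $\Omega^{[j]}_X(A)\hookrightarrow f_*\bigl(\Omega^{[j]}_Y(f^*A)\bigr)$ and hence, $f$ being finite, a split injection $H^i(X,\Omega^{[j]}_X(A))\hookrightarrow H^i(Y,\Omega^{[j]}_Y(f^*A))$ — note that the form bundle appearing on the right is already $\Omega^{[j]}_Y$, the fixed sheaf on the fixed variety $Y$, so the comparison sheaf $Q_i$ never enters. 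From there your endgame works verbatim: as $\deg f\to\infty$, $f^*A$ goes arbitrarily far up the ample ray of $Y$, and Fujita vanishing (the paper uses its Weil-divisor refinement, Lemma \ref{lem:FujitaWeil}, to handle the general normal setting; for Cartier $A$ and Picard number $1$, Serre vanishing as you propose suffices) kills $H^i(Y,\Omega^{[j]}_Y(f^*A))$, so $X$ satisfies Bott vanishing. You had all the ingredients available — you already used the trace for $\sO_X\hookrightarrow f_*\sO_Y$ — but by splitting only at the level of structure sheaves and then comparing $f^*\Omega^i_X$ with $\Omega^i_Y$ by hand, you created an obstruction ($Q_i$ on the unbounded family $R_f$) that the correct formulation of the trace avoids completely.
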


The following assertion is a key ingredient for Theorem \ref{Introthm:rank one}. An endomorphism
$f\colon X\to X$ is said to be {\em int-amplified} if there is an ample Cartier divisor $H$ on $X$ such that $f^*H-H$ is ample \cite{Meng-building,Meng-Zhang-normal}. 

\begin{theo}\label{Introtheo:Bott}
Let $X$ be a normal projective variety over
a perfect field $k$.
Suppose that $X$ admits an int-amplified endomorphism
whose degree is invertible in $k$.
Then $X$ satisfies Bott vanishing for ample Weil divisors.
That is,
\[
H^i(X,\Omega^{[j]}_X(A))=0
\]
for every $i>0$, $j\geq 0$, and $A$ an ample Weil divisor.
\end{theo}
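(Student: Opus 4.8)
The plan is to use the endomorphism $f$ to produce, for every $n$, a split injection $H^i(X,\Omega^{[j]}_X(A))\hookrightarrow H^i(X,\Omega^{[j]}_X((f^n)^{*}A))$, and then to show that the target vanishes once $n$ is large because $(f^n)^{*}A$ becomes arbitrarily positive. To build the maps, note first that since $\deg f$ is invertible in $k$ and $k$ is perfect, the finite surjection $f$ is separable, hence generically \'etale. Let $U\subseteq X$ be the big open subset obtained by deleting the singular locus of $X$ together with the ramification locus, so that $\codim_X(X\setminus U)\ge 2$ and $f$ is \'etale over $U$. On $U$ the differential $\Lambda^j(df)\colon f^{*}\Omega^j\to\Omega^j$ is an isomorphism, the unit $\eta\colon \sO\to f_*f^{*}\sO$ and the trace $\mathrm{Tr}\colon f_*\sO\to\sO$ are defined, and $\mathrm{Tr}\circ\eta=\deg f$. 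Because $X$ is normal and $\Omega^{[j]}_X$ and $\sO_X(A)$ are reflexive, each of these maps extends uniquely across $X\setminus U$; using $\sO_X((f^n)^{*}A)=(f^n)^{[*]}\sO_X(A)$ and the reflexive projection formula, I obtain sheaf maps compatible with the twist by $A$.

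This yields a pullback-of-forms map $\theta\colon H^i(X,\Omega^{[j]}_X(A))\to H^i(X,\Omega^{[j]}_X(f^{*}A))$, assembled from $\eta$ followed by $\Lambda^j df$, and a trace map $\tau$ in the opposite direction, assembled from the reflexive projection formula and $\mathrm{Tr}$. Computing on the \'etale locus $U$ and extending by reflexivity, the composite $\tau\circ\theta$ is multiplication by $\deg f$. As $\deg f$ is a unit in $k$, $\theta$ is a split injection. Iterating with $f^n$ in place of $f$ gives
\[
H^i(X,\Omega^{[j]}_X(A))\ \hookrightarrow\ H^i(X,\Omega^{[j]}_X((f^n)^{*}A))\qquad(i>0,\ j\ge 0,\ n\ge 1).
\]

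It therefore suffices to prove that the right-hand group vanishes for $n\gg 0$. Here the int-amplified hypothesis enters: choosing an ample Cartier divisor $H$ with $f^{*}H-H$ ample, the telescoping identity $(f^n)^{*}H-H=\sum_{k=0}^{n-1}(f^k)^{*}(f^{*}H-H)$ writes $(f^n)^{*}H$ as $H$ plus a sum of ample classes (pullbacks of ample divisors by finite surjections are ample), so $(f^n)^{*}$ expands positivity without bound; combined with the eigenvalue description of int-amplified endomorphisms, this forces $(f^n)^{*}A$ to dominate any fixed ample divisor for $n$ large. I would then invoke Fujita's vanishing theorem, which holds in arbitrary characteristic: for the fixed coherent sheaf $\Omega^{[j]}_X$ there is a bound $m_0$ beyond which the higher cohomology of $\Omega^{[j]}_X$ twisted by an $m_0$-fold ample plus any nef class vanishes. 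Placing $(f^n)^{*}A$ inside the range of this uniform vanishing for $n\gg 0$ gives $H^i(X,\Omega^{[j]}_X((f^n)^{*}A))=0$, and the split injection above then proves the theorem.

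The main obstacle is exactly this last step in the stated generality: $A$, and hence each $(f^n)^{*}A$, is only a Weil divisor and may fail to be $\mathbb{Q}$-Cartier, so it has no class in $N^1(X)$ and Fujita vanishing cannot be applied to a line-bundle twist directly. The family $\{(f^n)^{*}A\}_n$ is moreover not a set of multiples of a single divisor. The real work is to establish a uniform Fujita-type vanishing for the reflexive sheaf $\Omega^{[j]}_X$ under \emph{ample Weil} twists, and to show, using the expanding property of $f^{*}$, that every $(f^n)^{*}A$ with $n\gg 0$ lies in its scope, for instance by writing $(f^n)^{*}A$ as a large ample Weil part plus a nef correction. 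Once such a vanishing for reflexive sheaves under ample Weil twists is available the argument closes; throughout, normality is what lets me manipulate the reflexive sheaves across the codimension-two ramification locus, while the perfect base field and the invertibility of $\deg f$ are what guarantee separability and the unit scalar $\deg f$ in $\tau\circ\theta$.
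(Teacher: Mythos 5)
Your overall architecture is the same as the paper's: a split injection $H^i(X,\Omega^{[j]}_X(A))\hookrightarrow H^i(X,\Omega^{[j]}_X((f^e)^{*}A))$ built from the trace on differential forms, growth of positivity of $(f^e)^{*}A$ from the int-amplified hypothesis (the paper does this with a rational $c>1$ such that $f^{*}H-cH$ is ample, so that $(f^e)^{*}A-uc^eH$ is nef), and a Fujita-type vanishing at the end. But there are two genuine gaps. First, your construction of the trace is flawed: you delete the ramification locus and claim $\codim_X(X\setminus U)\geq 2$, but for a separable finite cover the branch locus is typically a \emph{divisor} (already $x\mapsto x^2$ on $\PP^1$ ramifies in codimension one), so reflexivity does not extend your maps across $X\setminus U$. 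The nontrivial input is exactly that the trace of a regular form remains regular across the ramification divisor; this is the theorem of Garel and Kunz for finite morphisms of \emph{smooth} varieties, which the paper invokes and then extends to normal $X$ by deleting only $X^{\sing}\cup f(X^{\sing})$, which genuinely has codimension at least $2$. Your étale-locus-plus-reflexivity argument cannot replace that citation.

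Second, and more seriously, you explicitly leave unproved the step you correctly identify as ``the real work'': a Fujita-type vanishing for the reflexive sheaf $\Omega^{[j]}_X$ uniform over the Weil twists $(f^e)^{*}A$. Note that under the paper's definition an ample Weil divisor is $\Q$-Cartier by definition (some $sA$ is ample Cartier), and then $s(f^e)^{*}A$ is Cartier for every $e$; so the precise statement needed is Lemma \ref{lem:FujitaWeil}: for fixed $s$, there is one $m$ such that $H^i(X,E(mH+D))=0$ for all $i>0$ and \emph{all} nef Weil divisors $D$ with $sD$ Cartier. This is not formal: the class of $D$ modulo $\Pic(X)$ ranges over $(\Cl(X)/\Pic(X))[s]$, and one must show this group is finite before reducing to ordinary Fujita vanishing applied to the finite sum $\oplus_{j}E(D_j)$. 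When $p\mid s$ that finiteness rests on the finiteness of the flat cohomology $H^1(\,\cdot\,,\mu_s)$ of the smooth locus (Lemma \ref{lem:finiteness}), which the paper proves via de Jong's alterations, the Kummer sequence, and the structure of the Picard group of a smooth projective variety. Without supplying this lemma, your proposal defers precisely the paper's main technical innovation, so as written it is an outline with the key step conjectured rather than a proof. (A minor further point: one should first reduce to $k$ algebraically closed, using that normality is geometric over a perfect field.)
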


\begin{rem}
The assumption ``int-amplified'' is weaker than some
related conditions
on endomorphisms, such as {\em polarized},
meaning that there is an ample Cartier divisor $H$
with $f^*H\sim qH$ for some integer $q\geq 2$. For example,
the endomorphism $f(x,y)=(x^2,y^3)$ of $\PP^1\times \PP^1$
is int-amplified, but no positive iterate of $f$
is polarized. On the other hand, for a variety
with Picard group $\Z$,
every endomorphism of degree greater than 1
is polarized and hence int-amplified.
\end{rem}

\begin{rem}
    A smooth Fano variety that satisfies Bott vanishing
    is rigid, since $H^1(X,TX)
    =H^1(X,\Omega^{d-1}_X(-K_X))=0$, where $d$
    is the dimension of $X$.
    So Theorem \ref{Introtheo:Bott} implies
    that only finitely many smooth complex
    Fano varieties in each dimension admit
    an int-amplified endomorphism.
\end{rem}

\begin{rem}
    In proving Theorem \ref{Introtheo:Bott} for singular
    varieties, we develop some interesting tools.
    In particular, we prove the finiteness of flat cohomology
    $H^1(X,\mu_p)$ for smooth varieties $X$ over
    an algebraically closed field of characteristic $p$
    (Lemma \ref{lem:finiteness}).
\end{rem}

Finally, we show that a Fano variety with a suitable
endomorphism is well-behaved in characteristic $p$:

\begin{theo}\label{Introtheo:global}
Let $X$ be a Fano variety over a perfect field $k$
of characteristic $p>0$. Suppose that $X$ admits
an int-amplified endomorphism of degree prime to $p$.
If $X$ is strongly $F$-regular (for example, smooth),
then it is globally $F$-regular.
\end{theo}

It is known that the mod $p$ reductions of a klt Fano
variety in characteristic zero are globally $F$-regular
for sufficiently large primes $p$ (\cite[Theorem 1.2]{SS10}). The point of
Theorem \ref{Introtheo:global} is that it holds
even if $p$ is small. In this respect,
Fano varieties with a suitably nontrivial endomorphism
behave well,
somewhat like toric varieties.
For example, Petrov
showed that the Hodge spectral sequence degenerates
for all smooth projective varieties that are
globally $F$-split (which follows from
globally $F$-regular) \cite[Corollary 2.7.6]{Bhatt}.

For hypersurfaces of dimension at least 3 and degree
at least 2, it is straightforward to see that Bott vanishing fails. This implies Theorem \ref{Introthm:rank one}(3) by Theorem \ref{Introtheo:Bott}.

The proof of Theorem \ref{Introthm:rank one} is similar
for cases (1) and (2). To describe case (1):
we show that projective space is the only smooth Fano threefold of Picard number 1 that satisfies Bott vanishing (see also Remark \ref{rem:Bott by Totaro}).
In characteristic zero, this is an easy consequence of the classification \cite{Algebraic-Geometry-V}.
So assume that the characteristic $p$ is greater than 0.
Since we do not have such a complete classification in this case, we lift $X$ to characteristic zero.
By Theorem \ref{Introtheo:Bott},
we can take a lift $\tilde{X}_{\overline{K}}$ of $X$, which is a smooth Fano threefold of Picard number 1 over an algebraically closed field $\overline{K}$ of characteristic zero.
However, lifting endomorphisms is difficult in general, and therefore we prove that $\tilde{X}_{\overline{K}}$ satisfies
Bott vanishing instead. 
Then $\tilde{X}_{\overline{K}}\cong \PP^3_{\overline{K}}$ by the argument in characteristic zero.
Finally, observing that the Fano indices are preserved by lifting, we conclude that $X\cong\PP_k^3$. 

\begin{rem}\label{rem:Bott by Totaro}
     The paper \cite{Totaro} determines which smooth Fano
     threefolds in characteristic zero satisfy Bott vanishing
     \cite[Theorem 0.1]{Totaro}.    
    For smooth Fano threefolds in positive characteristic
    that are constructed in the same way as smooth Fano
    threefolds in characteristic zero, that paper also
    determines
    which ones satisfy Bott vanishing. Also, after
    this paper appeared on the arXiv, Tanaka
    showed that the classification of smooth Fano threefolds
    takes essentially the same form in any characteristic
    \cite[Theorem 1.1]{Tanaka-fano4}. The only question
    left open by Tanaka is whether there is a Fano 3-fold
    with Picard number 1, Fano index 1, and genus
    $g=11$ (that is, $(-K_X)^3=2g-2=20$) in some characteristic $p$, although this may be resolved
    soon. Independent of Tanaka's results,
    we show unconditionally that projective space
    is the only smooth Fano threefold of Picard number 1
    in any characteristic that satisfies Bott vanishing
    (Proposition \ref{prop:rank one}).
\end{rem}

\subsection{Related results}
\subsubsection{Two-dimensional case in positive characteristic.}
Nakayama showed that a smooth projective rational
surface in characteristic $p$ that admits
an endomorphism whose degree is greater than 1
and prime to $p$ must be toric
\cite[Proposition 4.4]{Nakayama(endomorphism)}.
On the other hand, he found a smooth rational surface that admits a separable polarized endomorphism but is not toric \cite[Example 4.5]{Nakayama(endomorphism)}.

\subsubsection{Failure of Bott vanishing for separable
polarized endomorphisms}
Answering a question in the first version of this paper,
we give an example to show
that Theorem \ref{Introtheo:Bott} fails
(that is, Bott vanishing fails)
if the assumption that the endomorphism $f$ has degree
invertible in $k$
is weakened to the assumption that $f$ is separable
(Proposition \ref{prop:separable}).
We do not know whether Theorems \ref{Introthm:rank one}
and \ref{Introthm:degree}
hold for $f$ separable rather than for $f$ of degree
invertible in $k$.

\subsubsection{Three-dimensional case in arbitrary characteristic.}
Normal projective $\Q$-Gorenstein threefolds that admit polarized endomorphisms in arbitrary characteristic were studied in detail by Cascini--Meng--Zhang
\cite[Theorem 1.8]{Cascini-Meng-Zhang}.
In particular, they proved that a smooth rationally chain connected threefold in characteristic $p$ admitting a polarized endomorphism $f$ has an $f$-equivariant
minimal model program if $p>5$ and the degree of the Galois closure $f^{\mathrm{Gal}}$ of $f$ is prime to $p$.

\subsubsection{Fano threefolds of arbitrary Picard number in characteristic zero.}
Meng--Zhang--Zhong \cite{Meng-Zhang-Zhong} proved that a smooth Fano threefold over an algebraically closed field of characteristic zero that admits an int-amplified endomorphism is toric.
Therefore, it is natural to ask the following question.
After the first version of this paper, a positive answer
was given in \cite[Theorem 6.1]{Totaro-endo}.
\begin{ques}
Let $X$ be a smooth Fano threefold over an algebraically closed field of characteristic $p>0$.
Suppose that $X$ admits an int-amplified endomorphism whose degree is prime to $p$.
Is $X$ toric?
\end{ques}

\subsection{Notation and terminology}
Unless otherwise mentioned, $k$ is an algebraically closed field of characteristic $p\geq0$ and a variety is defined over $k$.

A morphism of varieties $f\colon Y\to X$ over $k$
is {\it separable }if it is dominant
and $k(Y)$ is a separable field extension
of $k(X)$. Equivalently (for $k$ algebraically closed),
the derivative of $f$ is surjective at some smooth point
of $Y$.

For a normal variety $X$ over a field $k$ and
$i\geq 0$, we write $\Omega^i_X$
for $\Omega^i_{X/k}$. The sheaf
of {\em reflexive differentials} $\Omega^{[i]}_X$
is the double dual $(\Omega^i_X)^{**}$. More generally,
for a Weil divisor $D$ on $X$, we write $\Omega^{[i]}_X(D)$
for the reflexive sheaf $(\Omega^i_X\otimes \sO_X(D))^{**}$.
If $X$ is smooth over $k$, then $\sO_X(D)$ is a line bundle
and $\Omega^{[i]}_X(D)$
is just the tensor product $\Omega^i_X\otimes \sO_X(D)$.

\section{Bott vanishing and endomorphisms}
In this section,
we prove Theorem \ref{Introtheo:Bott}, relating
Bott vanishing with endomorphisms.
We also
prove a general relation between Bott vanishing
and morphisms into a given variety,
not just endomorphisms (Proposition
\ref{prop:degree bound}).

\subsection{Finiteness of flat cohomology}

The following lemma may be known, but we could not find
a reference in this generality.

\begin{lem}\label{lem:finiteness}
Let $X$ be a smooth variety over an algebraically closed
field $k$, and let $s$ be a positive integer.
Then the flat cohomology group $H^1(X,\mu_s)$ is finite.
\end{lem}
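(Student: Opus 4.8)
The plan is to read off the structure of $H^1(X,\mu_s)$ from the Kummer sequence and then prove two separate finiteness statements, one about global units and one about torsion in the Picard group. First I would use that the $s$-th power map $\mathbb{G}_m\to\mathbb{G}_m$ is finite and faithfully flat with kernel $\mu_s$, so that
\[
1\to\mu_s\to\mathbb{G}_m\xrightarrow{\ s\ }\mathbb{G}_m\to1
\]
is exact for the fppf topology (this is where working flat rather than \'etale matters, since the sequence fails to be exact \'etale-locally when $p\mid s$). Because flat and Zariski cohomology of $\mathbb{G}_m$ coincide and $H^1(X,\mathbb{G}_m)=\Pic(X)$, the associated long exact sequence gives
\[
0\to \mathcal{O}_X(X)^{*}/(\mathcal{O}_X(X)^{*})^{s}\to H^1(X,\mu_s)\to \Pic(X)[s]\to 0,
\]
so it suffices to prove that both outer terms are finite.

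For the unit term I would invoke Rosenlicht's unit theorem: for the integral variety $X$ the group $\mathcal{O}_X(X)^{*}/k^{*}$ is free of finite rank $r$. Since $k$ is algebraically closed, $k^{*}$ is divisible by every positive integer $s$ (for the part of $s$ divisible by $p$ one uses that Frobenius is surjective on $k^{*}$), hence $\mathcal{O}_X(X)^{*}/(\mathcal{O}_X(X)^{*})^{s}\cong(\mathbb{Z}/s)^{r}$ is finite. For the torsion term, writing $s=\prod_\ell \ell^{a_\ell}$ gives $\Pic(X)[s]=\bigoplus_\ell \Pic(X)[\ell^{a_\ell}]$, so I may treat one prime at a time. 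For $\ell\neq p$ the group scheme $\mu_{\ell^{a}}$ is \'etale, and the \'etale Kummer sequence realizes $\Pic(X)[\ell^{a}]$ as a quotient of $H^1_{\mathrm{et}}(X,\mu_{\ell^{a}})$, which is finite by the finiteness theorem for \'etale cohomology of a variety over a separably closed field with coefficients prime to the characteristic.

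The remaining case, and what I expect to be the main obstacle, is the finiteness of $\Pic(X)[p^{a}]$ in characteristic $p$ --- equivalently, of $H^1(X,\mu_{p^{a}})$ itself. The comfortable tools all fail here: $X$ need not be proper, so neither the abelian-variety structure of $\Pic^{0}$ nor finite-dimensionality of coherent cohomology is available; the \'etale finiteness theorem does not apply to the infinitesimal group scheme $\mu_{p}$; and the pushforward--pullback argument along an alteration controls only prime-to-degree torsion, while de Jong's alterations can have degree divisible by $p$ and prime-to-$p$ alterations need not exist. My plan is therefore to reduce to a proper model and then trade $\mu_{p^a}$ for differential forms. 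Choosing a dense open immersion $X\hookrightarrow\bar X$ into a normal proper variety and using the excision sequence
\[
\bigoplus_{i}\mathbb{Z}[D_i]\to \Cl(\bar X)\to \Cl(X)=\Pic(X)\to 0
\]
for the finitely many boundary divisors $D_i$, the snake lemma reduces finiteness of $\Pic(X)[p^{a}]$ to that of $\Cl(\bar X)[p^{a}]$. To bound the latter I would describe $p^{a}$-torsion classes via the logarithmic derivative $d\log$, landing in closed differential forms and using the Cartier operator to isolate the image of $\mu_{p^{a}}$; this replaces flat $\mu_{p^{a}}$-cohomology by the cohomology of a logarithmic de Rham--Witt sheaf on a smooth model, whose finiteness one hopes to extract from the properness of $\bar X$. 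Making this precise without resolution of singularities --- producing a model to which the logarithmic sheaf extends, or otherwise bounding the $p$-primary torsion by hand --- is the heart of the matter, and is where I expect the genuinely new input to be required.
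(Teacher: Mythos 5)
Your reduction is correct as far as it goes, and in fact it coincides with the paper's opening moves: the fppf Kummer sequence, Rosenlicht's theorem making $\mathcal{O}(X)^*/(\mathcal{O}(X)^*)^s$ finite, and the \'etale finiteness theorem for the prime-to-$p$ torsion are all exactly what the paper uses. The genuine gap is precisely where you place it, the $p$-primary torsion of $\Pic(X)$ in characteristic $p$ --- but the paper closes it with the very tools you dismissed, and no new input is needed. Your objection that ``the pushforward--pullback argument along an alteration controls only prime-to-degree torsion'' is a misdiagnosis: the paper never uses a norm or trace map, so the degree of the alteration is irrelevant. Instead, for $s=p$ one first embeds $H^1(X,\mu_p)$ into $k(X)^*/(k(X)^*)^p$ (this uses smoothness of $X$; it is a lemma of Keller). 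De Jong's theorem provides a \emph{separable}, i.e.\ generically \'etale, alteration $\overline{Y}\to\overline{X}$ of a normal compactification, with $\overline{Y}$ smooth projective; setting $Y$ to be the preimage of $X$, the extension $k(Y)/k(X)$ is finite separable, and then $k(X)^*/(k(X)^*)^p\to k(Y)^*/(k(Y)^*)^p$ is injective for an elementary reason: if $b\in k(Y)$ satisfies $b^p\in k(X)$, then $b$ is both purely inseparable and separable over $k(X)$, hence lies in $k(X)$. This reduces the whole problem to $Y$, an open subvariety of the smooth projective $\overline{Y}$, where $\Pic(Y)$ is a quotient of $\Pic(\overline{Y})$ by a finitely generated subgroup of boundary divisor classes, and $\Pic(\overline{Y})[p]$ is finite because $\Pic(\overline{Y})$ is an extension of the finitely generated N\'eron--Severi group by $A(k)$ for an abelian variety $A$, whose $p$-torsion is finite (of order at most $p^{\dim A}$) even in characteristic $p$. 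So the ``comfortable tool'' you declared unavailable for non-proper $X$ --- the abelian-variety structure of $\Pic^0$ --- is exactly what gets used, after smoothing the compactification.

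Your proposed fallback, by contrast, would not have worked as stated: reducing to $\Cl(\bar X)[p^a]$ for a normal compactification $\bar X$ is essentially circular, since $\Cl(\bar X)=\Pic(\bar X^{\mathrm{reg}})$ and the regular locus $\bar X^{\mathrm{reg}}$ is again a smooth, generally non-proper variety --- the original problem in disguise. (The paper exploits this identity in the \emph{opposite} direction: in Lemma \ref{lem:FujitaWeil} it deduces finiteness of $\Cl(X)[s]$ for normal projective $X$ from the present lemma applied to the smooth locus.) And the $d\log$/logarithmic de Rham--Witt route you sketch is left unexecuted, as you acknowledge. The one-line idea you were missing is that separability of the alteration, not invertibility of its degree, is what makes the comparison of $p$-th power classes in the function fields injective.
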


\begin{proof}
The problem reduces to the case where $s$ is prime.
If $s$ is invertible in $k$, then this finiteness holds
for cohomology in all degrees
\cite[Corollary VI.5.5]{Milne-etale}.
So we can assume that
$k$ has characteristic $p>0$ and $s=p$. (The result here is
special to $H^1$. Indeed, a supersingular K3 surface $X$
over $k$ has $H^2(X,\mu_p)$ infinite, containing
the additive group $k$
\cite[Proposition 4.2]{Artin-supersingular}.)
It is straightforward
to see that $H^1(X,\mu_p)$ injects into $H^1(k(X),\mu_p)=
k(X)^*/(k(X)^*)^p$ \cite[Lemma 3.9]{Keller}.

Let $\overline{X}$ be a normal compactification of $X$.
By de Jong, there is a separable alteration
$f\colon \overline{Y}\to\overline{X}$
\cite[Theorem 4.1]{deJong}.
That is, $f$ is generically
\'etale and $\overline{Y}$ is a smooth projective variety
over $k$.
Let $Y$ be the inverse
image of $X$ in $\overline{Y}$. Here $k(Y)$ is a finite separable
extension of $k(X)$, and so $k(X)^*/(k(X)^*)^p$ injects
into $k(Y)^*/(k(Y)^*)^p$. By the previous paragraph,
it follows that $H^1(X,\mu_p)$ injects into $H^1(Y,\mu_p)$.

So it suffices to show that $H^1(Y,\mu_p)$ is finite.
Consider the Kummer sequence $\sO(Y)^*/(\sO(Y)^*)^p\to H^1(Y,\mu_p)
\to \Pic(Y)[p]$. The group of units $\sO(Y)^*$ is an extension of a finitely generated abelian group by $k^*$
\cite[Lemme 1]{Kahn},
and so $\sO(Y)^*/(\sO(Y)^*)^p$ is finite.
So it suffices to show that $\Pic(Y)[p]$ is finite.

Since $\overline{Y}$ is smooth, we have $\Pic(Y)=\Pic(\overline{Y})/M$,
where $M$ is the subgroup generated by the codimension-1
subvarieties of $\overline{Y}$ contained
in $\overline{Y}-Y$. In particular, the abelian group $M$
is finitely generated. Applying the snake lemma to the map of exact sequences
\[
\xymatrix{
 0 \ar[r] &M\ar[r]\ar[d]^{ p} & \Pic(\overline{Y})\ar[r] \ar[d]^{ p} &\Pic(Y)\ar[r]\ar[d]^{ p} & 0\\
 0 \ar[r]& M\ar[r] & \Pic(\overline{Y})\ar[r]  &\Pic(Y)\ar[r] & 0,
}
\]
we obtain an exact sequence
$\Pic(\overline{Y})[p]\to \Pic(Y)[p]\to M/p$. So $\Pic(Y)[p]$ is finite
if $\Pic(\overline{Y})[p]$ is finite. That is immediate from
the structure of the Picard group of a smooth projective
variety:
$\Pic(\overline{Y})$ is an extension of a finitely generated abelian
group $NS(\overline{Y})$ by the $k$-points of an abelian variety
\cite[Th\'eor\`eme XIII.5.1]{SGA6}. Lemma \ref{lem:finiteness}
is proved.
\end{proof}

\subsection{Bott vanishing}

In this subsection, we prove Theorem \ref{Introtheo:Bott}.
We are generalizing Fujino's proof of Bott vanishing
for toric varieties, based on the existence of suitable
endomorphisms, beyond the toric setting. 

\begin{defn}\label{def:Bott vanishing}
   Let $X$ be a smooth projective variety over a field.
   We say that $X$ satisfies \textit{Bott vanishing }if we have
   \[
    H^i(X,\Omega_X^j(A))=0
   \]
  for every $i>0$, $j\geq 0$, and $A$ an ample Cartier
    divisor.
\end{defn}

For singular varieties, we consider the following
strong version of Bott vanishing. A Weil divisor
(with integer coefficients) is called {\it ample }if
some positive multiple is an ample Cartier divisor.
Likewise for {\it nef}.

\begin{defn}\label{def:Bott-singular}
Let $X$ be a normal projective variety over a field.
We say that $X$ satisfies \textit{Bott vanishing
for ample Weil divisors }if we have
\[
H^i(X,\Omega_X^{[j]}(A))=0
\]
for every $i>0$, $j\geq 0$, and $A$ an ample Weil divisor.
\end{defn}

\begin{rem}
    All projective toric varieties satisfy Bott vanishing
    for ample Weil divisors, by Fujino
    \cite[Proposition 3.2]{Fujino-toric}.
\end{rem}

\begin{proof}[Proof of Theorem \ref{Introtheo:Bott}]
    Since $k$ is perfect, the normal variety $X$ over $k$
    is geometrically normal \cite[Tag 038O]{stacks-project}.
    Replacing $k$ with its algebraic closure, we may assume that $k$ is algebraically closed.
    For clarity, we first prove the theorem
    for $X$ smooth. (That is enough for the applications
    in this paper.)
    So let $X$ be a smooth projective variety
    over an algebraically closed field $k$
    with an endomorphism $f$ and an ample Cartier divisor $H$ on $X$ such
    that $f^*H-H$ is ample. In particular, $f^*H$
    is ample, and so $f$ does not contract any curves.
    Therefore, $f\colon X\to X$ is finite. We assume
    that the degree of $f$ is invertible in $k$.
    
    Let $A$ be any ample Cartier divisor on $X$.
    Fix $i>0$ and $j\geq 0$. We want to show that
    \[
    H^i(X, \Omega_X^j(A))=0.
    \]
    We will use Fujita's vanishing theorem \cite[Theorem 1]{Fujita}:

\begin{thm}\label{theo:fujita}
    Let $X$ be a projective scheme over a field,
    $H$ an ample Cartier divisor on $X$, and $E$
    a coherent sheaf on $X$. Then there is a positive
    integer $m$ such that $H^i(X,E\otimes \sO_X(mH+D))=0$
for every $i>0$ and every nef Cartier divisor $D$ on $X$.
\end{thm}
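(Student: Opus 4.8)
The plan is to prove this uniform vanishing (Fujita's theorem) by reducing it to a boundedness statement for Castelnuovo--Mumford regularity and then inducting on $\dim\Supp(E)$, cutting by a general member of $|H|$. First I would make two harmless reductions. If $cH$ is very ample for some $c>0$ and the theorem holds for the pair $(cH,E)$ with bound $m'$, then it holds for $H$ with bound $cm'$: for $m\ge cm'$ one writes $mH+D=m'(cH)+D'$ with $D'=(m-cm')H+D$ again nef, and applies the very ample case. So I may assume $H$ is very ample, giving an embedding $X\hookrightarrow\PP^N$ with $H=\sO_X(1)$. Next, since cohomology commutes with the flat base change to the algebraic closure $\bar k$, a vanishing over $\bar k$ is equivalent to the vanishing over $k$ by faithful flatness, and a nef divisor on $X$ pulls back to a nef divisor on $X_{\bar k}$; hence a uniform bound found over $\bar k$ (valid for all nef divisors there, in particular the pullbacks) also works over $k$, and I may assume $k$ algebraically closed, in particular infinite. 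It is then convenient to reformulate the goal as: the Castelnuovo--Mumford regularity of $E\otimes\sO_X(D)$ with respect to the very ample $H$ is bounded above independently of the nef divisor $D$. By Mumford's basic properties of regularity, such a uniform bound $m_0$ gives $H^i(X,E\otimes\sO_X(D)\otimes\sO_X(mH))=0$ for all $i>0$ and all $m\ge m_0$, which is the assertion.

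The engine is induction on $d=\dim\Supp(E)$. When $d\le 0$ the sheaf $E\otimes\sO_X(D)$ is supported on finitely many points, all higher cohomology vanishes, and the regularity is bounded trivially. When $d=1$ the support is a curve, and the uniform vanishing of $H^1(X,E(mH+D))$ is elementary: the degree of $mH+D$ on each component of the support is at least $m$ times a positive constant, since $D$ is nef and contributes nonnegative degree, so Riemann--Roch on the (possibly singular, reducible) support forces $H^1$ to vanish for $m$ large, uniformly in $D$. For the inductive step with $d\ge 2$, I would choose a general member $Y\in|H|$ avoiding the finitely many associated points of $E$ (possible since $k$ is infinite), giving a short exact sequence
\[
0\to E(-H)\to E\to E_Y\to 0,
\]
in which $E_Y$ has support of dimension $d-1$. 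Applying the inductive hypothesis to $E_Y$ on $Y$ (with the very ample $H|_Y$, and noting $D|_Y$ is nef) produces a bound $m_1$ so that $E_Y\otimes\sO_Y(D)$ is $m_1$-regular for every nef $D$. Feeding this into the long exact cohomology sequence of the displayed sequence, twisted by $\sO_X(mH+D)$, the terms coming from $E_Y$ vanish in the relevant range, and the injectivity/surjectivity analysis underlying Mumford's regularity lemma, together with Serre vanishing for each fixed $D$ to pin down the stable value, yields two conclusions: the groups $H^i(X,E(mH+D))$ vanish for all $i\ge 2$ and all $m\ge m_1$, uniformly in $D$; and the multiplication maps $H^1(E(mH+D))\to H^1(E((m+1)H+D))$ given by the section $s_Y$ cutting out $Y$ are surjective for $m\ge m_1-1$. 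The second point means that a single uniform level of $H^1$-vanishing propagates to all higher levels.

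Thus everything reduces to the crux, which I expect to be the main obstacle: producing a single level $k_0$, independent of the nef divisor $D$, at which $H^1(X,E(k_0H+D))=0$; equivalently, that the restriction maps $H^0(X,E(kH+D))\to H^0(Y,E_Y(kH+D))$ become surjective from a level on that is uniform in $D$. This is the genuine content of Fujita's theorem and is not supplied by the bookkeeping above. The subtlety is sharp: the coranks of these restriction maps telescope, over all levels, to $h^1(X,E((m_1-1)H+D))$, which is governed by an Euler characteristic and so need not be bounded uniformly in $D$; hence one cannot simply bound the total failure, but must instead show that it is concentrated in a range of levels that is uniform in $D$, i.e. that the restriction maps become surjective from a uniform level on. I would attack this by studying the finitely generated graded module $\bigoplus_m H^1(X,E(mH+D))$ over the section ring $\bigoplus_m H^0(X,\sO_X(mH))$ of $H$, using the surjectivity of multiplication by $s_Y$ from the inductive step to confine its generators to a range independent of $D$; this is in essence Fujita's argument, and alternatively one may invoke Keeler's general form of the vanishing theorem. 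Once such a uniform $k_0$ is available, the surjections of the previous paragraph propagate the vanishing of $H^1$ to all $m\ge k_0$, and together with the uniform vanishing already obtained for $i\ge2$ this furnishes the desired regularity bound and completes the proof.
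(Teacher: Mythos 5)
A preliminary remark on the comparison you were asked about: the paper does not prove Theorem \ref{theo:fujita} at all --- it is quoted verbatim from Fujita \cite{Fujita} and used as a black box --- so your attempt is measured against the known proofs in the literature rather than against an argument in the text. Your scaffolding is correct and standard: the reduction to $H$ very ample, the base change to $\bar{k}$ (with the nefness direction handled correctly), the reformulation as a regularity bound uniform in the nef divisor $D$, the elementary base cases, and the hyperplane-cutting induction. In particular, from $0\to E(-H)\to E\to E_Y\to 0$ with $Y\in|H|$ avoiding the associated points of $E$, the inductive hypothesis does yield, uniformly in $D$: vanishing of $H^i(X,E(mH+D))$ for all $i\geq 2$ and $m\geq m_1$ (injectivity of the maps $H^i(E((m-1)H+D))\to H^i(E(mH+D))$ into groups that die for $m\gg 0$ by Serre for each fixed $D$), and surjectivity of multiplication by $s_Y$ on $H^1$ from level $m_1$ on. You also correctly isolate the crux: one level $k_0$, uniform in $D$, with $H^1(X,E(k_0H+D))=0$.

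The genuine gap is that your proposed mechanism for that crux cannot close it. Surjectivity of $\cdot\, s_Y$ from degree $m_1$ on does confine the generators of $M_D=\bigoplus_{m\geq m_1}H^1(X,E(mH+D))$ to degree $m_1$, but bounded generation degree plus finite length does \emph{not} bound the top nonvanishing degree: already $M=k[s]/(s^n)$ (generated in degree $0$, with $\cdot\, s$ surjective degree by degree until it dies) shows such a module can live arbitrarily long. Indeed your own telescoping observation makes the point --- the total corank is $h^1(X,E((m_1-1)H+D))$, which is not bounded uniformly in $D$ --- so ``confining the generators'' adds nothing beyond what the surjectivity already gave. Closing the gap requires a genuinely new input, which in the known arguments is (roughly) a boundedness step: nef line bundles of bounded $H$-degree lie in finitely many numerical classes in the finitely generated N\'eron--Severi group, each class is a bounded family (a torsor under $\Pic^0$), and semicontinuity over such families yields a uniform vanishing level there, with the complementary large-degree regime handled through the dimension induction; none of this is recoverable from the hyperplane bookkeeping alone. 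Finally, your fallback of invoking Keeler's general vanishing theorem is circular in this context, since that result is a generalization of the very statement to be proved --- at that point one may as well cite \cite{Fujita} directly, which is exactly what the paper does.
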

    
    Since $f$
    is finite and $X$ is smooth over $k$,
    there is a {\em trace map }$\tau_f\colon f_{*}\Omega^j_X\to \Omega^j_X$ such that the composition
    \[
    \Omega^j_X\overset{f^{*}}{\to} f_{*}\Omega^j_X\overset{\tau_f}{\to} \Omega^j_X
    \] 
    is multiplication by $\deg(f)$,
    by Garel and Kunz \cite{Garel},
    \cite[section 16]{Kunz}, \cite[Tag 0FLC]{stacks-project}.
    Thus $\frac{1}{\deg(f)}\tau_f$ gives a splitting of the pullback $f^{*}\colon \Omega^j_X\hookrightarrow f_{*}\Omega^j_X$. Taking the pushforward by $f$,
    we obtain a split injective map
    $f_*\Omega^j_X\hookrightarrow (f^2)_*\Omega^j_X$,
    and thus a split injective map
    $\Omega^j_X\hookrightarrow (f^2)_*\Omega^j_X$.
    Repeating this procedure,
    for every positive integer $e$,
    $(f^{e})^{*}\colon \Omega^j_X\hookrightarrow
    (f^{e})_{*}\Omega^j_X$ splits. Tensoring with $\sO_X(A)$,
    we have a split injective map
    \[
    \Omega^j_X(A)\to (f^{e})_{*}(\Omega^j_X((f^{e})^{*}A)).
    \]
    Taking cohomology (and using that $f^e$ is finite), we have a split injective map
    \[
    H^i(X, \Omega^j_X(A))\hookrightarrow
    H^i(X, \Omega^j_X((f^{e})^{*}A)).
    \]
    So it suffices to find an $e\geq 1$
    such that the right hand side is zero.
    Let $m$ be a positive integer associated
    to the given ample Cartier divisor $H$ (with $f^*H-H$ ample) and the coherent sheaf $E=\Omega^i_X$, in Fujita
    vanishing (Theorem \ref{theo:fujita}). Then it suffices
    to find an $e\geq 1$ such that $(f^e)^*A-mH$ is nef.

    Since $f^*H-H$ is ample, there is a rational number
    $c>1$ such that $f^*H-cH$ is ample. Here $f^*$
    takes nef divisors to nef divisors. So, for every
    $e\geq 1$, $(f^e)^*H-c^eH$ is nef.

    Since $A$ is ample, there is a rational number $u>0$
    such that $A-uH$ is ample. Using again that $f^*$
    takes nef divisors to nef divisors, we find that
    for every $e\geq 1$,
    $(f^e)^*A-u(f^e)^*H$ is nef. It follows that
    $(f^e)^*A-uc^eH$ is nef. There is a positive integer
    $e$ such that $uc^e\geq m$. Then $(f^e)^*A-mH$
    is nef, as we want. Bott vanishing is proved.

    More generally, assume that $X$ is
    normal (rather than smooth).
    Since $f$ is a finite morphism
    between normal varieties,
    reflexive differentials $\Omega^{[j]}_X=
    (\Omega^j_X)^{**}$ pull back under $f$.
    (We can pull differential forms back outside
    $X^{\sing}\cup f(X^{\sing})$, and that gives
    a pullback map on reflexive differentials
    since the complement
    has codimension at least 2.)
    That is, we have a pullback map
    $\Omega^{[j]}_X\to f_*\Omega^{[j]}_X$. Likewise,
    the trace map $\tau_{f}
    \colon f_*\Omega^{[j]}_X\to \Omega^{[j]}_X$ is defined
    outside $X^{\sing}\cup f(X^{\sing})$, so it extends
    to all of $X$ since the sheaf $\Omega^{[j]}_X$
    is reflexive. Given this,
    the proof above works without change for an ample
    Cartier divisor $A$.

    Finally, to prove the full theorem, assume
    that $X$ is normal
    and $A$ is an ample Weil divisor. For this, we need
    a version of Fujita vanishing for $\Q$-Cartier Weil divisors:

    \begin{lem}\label{lem:FujitaWeil}
    Let $X$ be a normal projective variety
    over an algebraically closed field $k$,
    $H$ an ample Cartier divisor
    on $X$, $E$ a reflexive sheaf on $X$, and $s$ a positive
    integer.
    Then there is a positive integer $m$ such that
    $H^i(X,E(mH+D))=0$ for every
    $i>0$ and every nef Weil divisor $D$ such that $sD$
    is Cartier.
    \end{lem}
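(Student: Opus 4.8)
The plan is to reduce the statement to the Cartier case already available, namely Fujita vanishing (Theorem \ref{theo:fujita}), applied to a \emph{finite} list of auxiliary reflexive sheaves. The key point is that the nef Weil divisors $D$ with $sD$ Cartier fall into only finitely many classes modulo $\Pic(X)$; once a representative of each class is fixed, the remaining twist becomes Cartier and can be straightened into an ample Cartier divisor.

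Concretely, I would write $E(D):=(E\otimes\sO_X(D))^{**}$ and record that, since $\sO_X(mH)$ is a line bundle, $E(mH+D)=E(D)\otimes\sO_X(mH)$. Let $T\subseteq\Cl(X)/\Pic(X)$ be the subgroup of classes $[D]$ with $sD$ Cartier, that is, the $s$-torsion of $\Cl(X)/\Pic(X)$. Granting for the moment that $T$ is finite, choose Weil divisors $D_1,\dots,D_t$ representing its elements. For any nef Weil $D$ with $sD$ Cartier we have $[D]\equiv[D_l]$ modulo $\Pic(X)$ for some $l$, so $C:=D-D_l$ is Cartier and $E(mH+D)=E(D_l)\otimes\sO_X(mH+C)$, where $E(D_l)$ is one of finitely many coherent (reflexive) sheaves. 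Writing $mH+C=m_1H+\big((m-m_1)H-D_l+D\big)$, the divisor $(m-m_1)H-D_l$ is ample once $m-m_1$ exceeds a bound depending only on the finite list $D_1,\dots,D_t$ and on $H$ (and not on $D$); adding the nef divisor $D$ keeps it ample, and being Cartier it is an ample, hence nef, Cartier divisor. Theorem \ref{theo:fujita} applied to each $E(D_l)$ then gives the vanishing as soon as $m_1$ is at least the maximum of the finitely many Fujita thresholds attached to $E(D_1),\dots,E(D_t)$ and $H$. Taking $m$ to be the sum of these two uniform bounds proves the lemma.

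The crux is therefore the finiteness of $T=(\Cl(X)/\Pic(X))[s]$. Let $U$ be the smooth locus of $X$; since $X$ is normal, $X\setminus U$ has codimension at least $2$, so restriction gives $\Cl(X)=\Pic(U)$ and an exact sequence $0\to\Pic(X)\to\Pic(U)\to\Cl(X)/\Pic(X)\to0$. Tensoring with $\Z/s$ yields an exact sequence $\Pic(U)[s]\to T\to\Pic(X)/s\Pic(X)$. The left-hand group is finite: as $U$ is smooth, the Kummer sequence exhibits $\Pic(U)[s]$ as a quotient of the flat cohomology group $H^1(U,\mu_s)$, which is finite by Lemma \ref{lem:finiteness}. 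For the right-hand group, $NS(X)$ is finitely generated, so finiteness of $\Pic(X)/s\Pic(X)$ comes down to $s$-divisibility of $\Pic^0(X)(k)$. I expect this last point to be the main obstacle: when the characteristic divides $s$, the $k$-points of a unipotent subgroup of $\Pic^0$ are \emph{not} $s$-divisible, so one genuinely needs that $\Pic^0$ of the normal variety $X$ carries no unipotent part (i.e.\ is semi-abelian). This is exactly the sort of positive-characteristic finiteness that motivated Lemma \ref{lem:finiteness}, and it is the step where normality of $X$, rather than smoothness, must be used in an essential way.
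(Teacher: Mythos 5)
Your proposal follows the paper's own proof almost step for step: the same finiteness claim for $(\Cl(X)/\Pic(X))[s]$, proved via the same two inputs (Lemma \ref{lem:finiteness} plus the Kummer sequence on the smooth locus $U$ to control $\Cl(X)[s]=\Pic(U)[s]$, and the Tor sequence $\Pic(U)[s]\to T\to \Pic(X)/s$), followed by the same reduction to Theorem \ref{theo:fujita} applied to the finitely many sheaves $E(D_1),\dots,E(D_t)$. The only cosmetic difference is in the last step: the paper normalizes the representatives by subtracting multiples of $H$ so that each $-D_j$ is nef, which makes $D-D_j$ directly the nef Cartier divisor fed into Fujita's theorem, whereas you keep the $D_l$ as they are and absorb the needed positivity into extra multiples of $H$. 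The two devices are equivalent, and your verification that $(m-m_1)H-D_l+D$ is an ample Cartier divisor (by passing to its $s$-th multiple, which is ample Cartier plus nef Cartier) is correct.

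The one point you flag as a possible obstacle --- finiteness of $\Pic(X)/s$ --- is not in fact a gap in the approach, but it is a step your write-up leaves unproved; it is settled by exactly the classical structure theorem the paper cites. For $X$ normal and projective over an algebraically closed field, $\Pic(X)$ is an extension of the finitely generated group $NS(X)$ by $A(k)$ for an abelian variety $A$ \cite[Th\'eor\`eme XIII.5.1]{SGA6}, \cite[Theorem 5.4]{Kleiman}. Your diagnosis of where normality enters is accurate: for non-normal proper varieties, $\Pic^0$ can acquire unipotent factors, and $\mathbb{G}_a(k)=k$ is not $p$-divisible in characteristic $p$, so the argument would break. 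But granted the theorem, $A(k)$ is $s$-divisible for \emph{every} $s$, including $s$ divisible by $p$: multiplication by $s$ on $A$ is an isogeny, hence surjective on $k$-points over an algebraically closed field. Therefore $\Pic(X)/s\cong NS(X)/s$ is finite, and with that citation your proof is complete and coincides with the paper's.
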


    \begin{proof}
        Since $X$ is a normal projective variety
        over $k$, the Picard group $\Pic(X)$ is an
        extension of a finitely generated abelian group
        by the $k$-points of an abelian variety
        \cite[Th\'eor\`eme XIII.5.1]{SGA6},
        \cite[Theorem 5.4]{Kleiman}. Therefore, $\Pic(X)/s$
        is finite.
        Let $U$ be the smooth locus of $X$, so that
        the divisor class group $\Cl(X)$ is isomorphic
        to $\Pic(U)$. By Lemma \ref{lem:finiteness},
        $H^1(U,\mu_s)$ is finite.
        By the Kummer sequence $H^1(U,\mu_s)
        \to H^1(U,G_m)\xrightarrow[]{s} H^1(U,G_m)$, the $s$-torsion
        subgroup $\Cl(X)[s]=\Pic(U)[s]$ is also finite. By tensoring the exact sequence $0\to \Pic(X)\to \Cl(X)
        \to \Cl(X)/\Pic(X)\to 0$ over $\Z$ with $\Z/s$,
        we have an exact sequence $\Cl(X)[s]\to (\Cl(X)/\Pic(X))
        [s]\to \Pic(X)/s$. So $(\Cl(X)/\Pic(X))[s]$ is finite.

        Let $D_1,\ldots,D_r$ be Weil divisors with $sD_j$
        Cartier that represent every element of the group
        $(\Cl(X)/\Pic(X))[s]$. By subtracting a suitable multiple
        of $H$ from each $D_j$, we can assume that $-D_j$
        is nef for each $j$. Then, for every nef Weil divisor
        $D$ on $X$ with $sD$ Cartier, there is a $1\leq j
        \leq r$ such that $D-D_j$ is Cartier. Apply Fujita's
        theorem (Theorem \ref{theo:fujita}) to the coherent
        sheaf $\oplus_{j=1}^rE(D_j)$. (By definition,
        $E(D_j)$ means
        the reflexive sheaf $(E\otimes \sO_X(D_j))^{**}$.)
        This gives that there is a positive integer $m$
        such that $H^i(X,E(mH+D_j+N))=0$
        for every $i>0$, every $1\leq j\leq r$,
        and every nef Cartier divisor $N$. Since $-D_j$
        is nef for each $j$, it follows that for every nef
        Weil divisor $D$ with $sD$ Cartier, we have
        $H^i(X,E(mH+D))=0$
        for every $i>0$. Lemma \ref{lem:FujitaWeil}
        is proved.
    \end{proof}

    We now complete the proof of Theorem \ref{Introtheo:Bott}
    for $X$ normal over an algebraically closed field $k$
    and an ample Weil divisor $A$. This follows by the
    argument above (where $A$ is an ample Cartier divisor),
    using Lemma \ref{lem:FujitaWeil} in place of Fujita's
    theorem. There is a positive
    integer $s$ such that $sA$
    is Cartier. Since $f$ is a finite morphism
    between normal varieties, $(f^e)^*A$ is a Weil divisor
    (with integer coefficients) for every positive integer
    $e$, and $s(f^e)^*A$ is Cartier. This is what we need
    in order to apply Lemma \ref{lem:FujitaWeil}.
    Theorem \ref{Introtheo:Bott} is proved.
    \end{proof}

\subsection{Bounding the degree of morphisms}

\begin{prop}\label{prop:degree bound}
Let $X$ and $Y$ be normal projective varieties
of the same dimension over a field $k$, and suppose
that both have Picard number 1. If there are morphisms
from $Y$ to $X$ with arbitrarily large degree
such that the degree is invertible in $k$,
then $X$ must satisfy Bott vanishing
for ample Weil divisors.
\end{prop}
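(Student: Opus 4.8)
The plan is to adapt the proof of Theorem \ref{Introtheo:Bott}, replacing the iterates $f^e\colon X\to X$ of an int-amplified endomorphism by the given sequence of morphisms $g_e\colon Y\to X$ of degrees $d_e\to\infty$ invertible in $k$. As in that proof, I would first reduce to the case where $k$ is algebraically closed (so that Lemma \ref{lem:FujitaWeil} applies), fix an ample Weil divisor $A$ on $X$ together with $i>0$ and $j\geq 0$, write $n=\dim X=\dim Y$, and aim to show $H^i(X,\Omega^{[j]}_X(A))=0$. Since each $d_e$ is invertible in $k$, the extension $k(Y)/k(X)$ has degree prime to the characteristic, so $g_e$ is separable. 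The first genuinely new observation is that $g_e$ is automatically finite: as $g_e$ is surjective and generically finite of degree $d_e$, one has $(g_e^*A)^n=d_e\,(A^n)>0$, so $g_e^*A$ is numerically nonzero; being nef (the pullback of the ample divisor $A$) and $Y$ having Picard number $1$, it is in fact ample, hence meets every curve positively, so $g_e$ contracts nothing.

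Next I would reproduce the trace-map splitting. For the finite separable morphism $g_e$ between normal varieties, reflexive differentials pull back, giving $\Omega^{[j]}_X\to (g_e)_*\Omega^{[j]}_Y$, while the trace map $\tau_{g_e}\colon (g_e)_*\Omega^{[j]}_Y\to\Omega^{[j]}_X$, defined away from the singular and branch loci and extended by reflexivity exactly as in the normal case of Theorem \ref{Introtheo:Bott}, composes with the pullback to multiplication by $d_e$. Hence $\tfrac{1}{d_e}\tau_{g_e}$ splits $\Omega^{[j]}_X\hookrightarrow (g_e)_*\Omega^{[j]}_Y$. Tensoring with $\sO_X(A)$ and taking reflexive hulls (working in codimension $1$, as in Theorem \ref{Introtheo:Bott}) yields a split injection $\Omega^{[j]}_X(A)\hookrightarrow (g_e)_*\bigl(\Omega^{[j]}_Y(g_e^*A)\bigr)$, and since $g_e$ is finite, passing to cohomology gives a split injection
\[
H^i\bigl(X,\Omega^{[j]}_X(A)\bigr)\hookrightarrow H^i\bigl(Y,\Omega^{[j]}_Y(g_e^*A)\bigr).
\]
It therefore suffices to make the right-hand side vanish for a single large $e$.

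The remaining point is the scaling that replaces the int-amplified hypothesis. Fix an ample Cartier divisor $H_Y$ on $Y$; since $Y$ has Picard number $1$, we may write $g_e^*A\equiv\lambda_e H_Y$ in $N^1(Y)_{\R}$ with $\lambda_e>0$, and then $\lambda_e^n(H_Y^n)=(g_e^*A)^n=d_e\,(A^n)$ forces $\lambda_e\to\infty$. Choose $s>0$ with $sA$ Cartier on $X$, so that $s\,g_e^*A=g_e^*(sA)$ is Cartier for every $e$. Applying Lemma \ref{lem:FujitaWeil} to $Y$ with the ample divisor $H_Y$, the reflexive sheaf $E=\Omega^{[j]}_Y$, and this $s$ produces an integer $m$, independent of $e$, with $H^i(Y,\Omega^{[j]}_Y(mH_Y+D))=0$ for every nef Weil divisor $D$ such that $sD$ is Cartier. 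For $e$ large enough that $\lambda_e\geq m$, the divisor $D_e:=g_e^*A-mH_Y$ is nef with $sD_e$ Cartier, so the cohomology group above vanishes, and hence $H^i(X,\Omega^{[j]}_X(A))=0$.

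The trace-map and Fujita-vanishing inputs are taken verbatim from Theorem \ref{Introtheo:Bott}, so the crux is the elementary intersection-theoretic argument of the previous two paragraphs: Picard number $1$ on $Y$ is what simultaneously forces each $g_e$ to be finite and turns ``degree $\to\infty$'' into ``$g_e^*A$ positively large compared to the fixed threshold $mH_Y$.'' The main obstacle I anticipate is bookkeeping rather than conceptual: making the reflexive-sheaf identities precise in the Weil-divisor setting (where $\sO_X(A)$ is only a rank-one reflexive sheaf) and verifying that a single $s$ and a single $m$ work uniformly in $e$, together with justifying the reduction to $k$ algebraically closed without disturbing normality or the Picard-number hypothesis.
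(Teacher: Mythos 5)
Your proposal is correct and follows the paper's proof essentially verbatim: finiteness of each morphism via Picard number 1 on $Y$, the trace-map splitting imported from the proof of Theorem \ref{Introtheo:Bott}, and Lemma \ref{lem:FujitaWeil} applied on $Y$, with your explicit $\lambda_e\to\infty$ computation merely making quantitative the paper's remark that $g_e^*A$ grows arbitrarily large along the single ray of the ample cone of $Y$. One cosmetic point: in the finiteness step you should compute with the pullback of an ample \emph{Cartier} divisor (say $sA$, or the paper's choice of an ample Cartier $H$) rather than with $g_e^*A$ itself, since pullback of a Weil divisor is only defined after finiteness is established; with that substitution your intersection-number argument coincides with the paper's.
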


\begin{proof}
Let $f\colon Y\to X$ be a morphism whose degree
is invertible in $k$.
Let $H$ be an ample Cartier divisor on $X$. Then $f^*H$
has positive degree on some curve, hence is ample
since $Y$ has Picard number 1. It follows that
$f\colon Y\to X$ is finite.

    Let $A$ be an ample Weil divisor on $X$,
    and fix $i>0$ and $j\geq 0$. We need to prove that
    \[
    H^i(X,\Omega_X^{[j]}(A))=0.
    \]
    Since $\deg(f)$ is invertible in $k$,
    we have a split injection
    \[
    \Omega_X^{[j]}(A)\hookrightarrow
    f_*(\Omega_Y^{[j]}(f^{*}A)),
    \]
    as in the proof
    of Theorem \ref{Introtheo:Bott}.
    Taking cohomology (and using that $f$ is finite),
    we have a split injection
    \[
    H^i(X,\Omega_X^{[j]}(A))\hookrightarrow
    H^i(Y,\Omega_Y^{j}(f^{*}A)).
    \]

    If there are morphisms $f\colon Y\to X$
    with arbitrarily large degree such that
    the degree is invertible in $k$, then $f^*A$
    becomes arbitrarily large in the ample cone
    of $Y$ (here just one ray). So Fujita vanishing
    for Weil divisors
    (Lemma \ref{lem:FujitaWeil}) gives that,
    for $f$ of sufficiently large degree,
    $H^i(Y,\Omega_Y^{[j]}(f^*A))=0$.
    By the previous paragraph, it follows that
    $H^i(X,\Omega_X^{[j]}(A))=0$.
    Bott vanishing is proved.
\end{proof}

\section{Bott vanishing for specific classes
of varieties}
\subsection{Hypersurfaces}

In this subsection, we prove that projective space is the only smooth hypersurface of dimension at least 3
that satisfies Bott vanishing.

\begin{lem}\label{lem:Calabi-Yau hypersurface}
    Let $X\subset \PP^{d+1}$ be a smooth hypersurface
    of degree $d+n$ over a field.
    Suppose that $d>1$ and $n>0$.
    Then $H^{d-1}(X, \Omega_X^{1}(n))\neq 0$, and in particular, $X$ does not satisfy Bott vanishing.
\end{lem}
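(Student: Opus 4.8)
The plan is to relate $\Omega^1_X(n)$ to the cotangent bundle of the ambient projective space via the conormal sequence, and thereby reduce the nonvanishing to the cohomology of line bundles on $X$, which is completely understood for hypersurfaces. Write $e=d+n$ for the degree, so that $\omega_X\cong\sO_X(e-d-2)=\sO_X(n-2)$, and recall the standard vanishing of intermediate cohomology $H^i(X,\sO_X(m))=0$ for $0<i<d$ and all $m$ (immediate from the ideal sequence $0\to\sO_{\PP^{d+1}}(m-e)\to\sO_{\PP^{d+1}}(m)\to\sO_X(m)\to 0$ and the cohomology of line bundles on $\PP^{d+1}$). Twisting the conormal sequence $0\to\sO_X(-e)\to\Omega^1_{\PP^{d+1}}|_X\to\Omega^1_X\to 0$ by $\sO_X(n)$ and using $n-e=-d$ gives
\[
0\to\sO_X(-d)\to\Omega^1_{\PP^{d+1}}|_X(n)\to\Omega^1_X(n)\to 0.
\]
Its long exact sequence will reduce the claim to understanding two groups: the connecting target $H^d(X,\sO_X(-d))$ and the group $H^d(X,\Omega^1_{\PP^{d+1}}|_X(n))$.

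The two computations I would carry out are: (a) $H^d(X,\Omega^1_{\PP^{d+1}}|_X(n))=0$; and (b) $H^d(X,\sO_X(-d))\neq 0$. For (b), Serre duality gives $H^d(X,\sO_X(-d))\cong H^0(X,\sO_X(n-2+d))^{*}=H^0(X,\sO_X(e-2))^{*}$, and since $e-2=d+n-2\geq 1>0$ under the hypotheses $d>1$, $n>0$, this group is nonzero. For (a), I would restrict the Euler sequence on $\PP^{d+1}$ to $X$ and twist by $\sO_X(n)$ to obtain $0\to\Omega^1_{\PP^{d+1}}|_X(n)\to\sO_X(n-1)^{\oplus(d+2)}\to\sO_X(n)\to 0$. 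In the top part of the associated long exact sequence, the neighboring terms $H^{d-1}(X,\sO_X(n))$ (intermediate cohomology, hence zero since $d\geq 2$) and $H^d(X,\sO_X(n-1))\cong H^0(X,\sO_X(-1))^{*}=0$ both vanish, sandwiching $H^d(X,\Omega^1_{\PP^{d+1}}|_X(n))$ to zero.

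Combining these, the vanishing in (a) makes the connecting map $H^{d-1}(X,\Omega^1_X(n))\to H^d(X,\sO_X(-d))$ surjective, and its target is nonzero by (b), so $H^{d-1}(X,\Omega^1_X(n))\neq 0$, which is the assertion. Since $\sO_X(n)$ is ample for $n>0$ and $d-1>0$, this nonvanishing directly contradicts Bott vanishing (Definition \ref{def:Bott vanishing}). The whole argument is characteristic-free, using only Serre duality and line-bundle cohomology. I expect the only genuine content to be computation (a): the point is that the Euler sequence together with Serre duality conspire to kill the top cohomology of the twisted restricted cotangent bundle, which is what lets the connecting map hit all of the nonzero group $H^d(X,\sO_X(-d))$; the remaining vanishings are routine, the one hypothesis to watch being that $d\geq 2$ is exactly what places the degree $d-1$ inside the intermediate range.
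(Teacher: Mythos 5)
Your proof is correct and follows essentially the same route as the paper: twist the conormal sequence by $\sO_X(n)$ to get $0\to\sO_X(-d)\to\Omega^1_{\PP^{d+1}}|_X(n)\to\Omega^1_X(n)\to 0$, show $H^d(X,\Omega^1_{\PP^{d+1}}|_X(n))=0$, and use Serre duality to see $H^d(X,\sO_X(-d))\cong H^0(X,\sO_X(d+n-2))^*\neq 0$, so the connecting map $H^{d-1}(X,\Omega^1_X(n))\to H^d(X,\sO_X(-d))$ is onto a nonzero group. The only divergence is in the middle vanishing: the paper computes it on the ambient space via the ideal-sheaf sequence, Bott vanishing on $\PP^{d+1}$, and $H^{d+1}(\PP^{d+1},\Omega^1_{\PP^{d+1}}(-d))\cong H^0(\PP^{d+1},T\PP^{d+1}(-2))^*=0$, whereas you work directly on $X$ with the restricted Euler sequence and hypersurface line-bundle cohomology (intermediate vanishing needing exactly $d\geq 2$, plus $H^d(X,\sO_X(n-1))\cong H^0(X,\sO_X(-1))^*=0$) --- an equally valid and slightly more self-contained variant of the same step.
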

\begin{proof}
   By adjunction, the dualizing sheaf $\omega_X$
   is isomorphic to $\sO_X(n-2)$. So Serre duality
   gives that
   $H^d(X, \sO_X(-d))\cong H^0(X, \sO_X(d+n-2))^*\neq 0$.
   By the Euler sequence
   \[0\to \sO_{\PP^{d+1}}\to
   \sO_{\PP^{d+1}}(1)^{\oplus d+2}\to T\PP^{d+1}\to 0
   \]
   we have $H^0(\PP^{d+1},T\PP^{d+1}(-2))=0$.
   By the exact sequence
   \[
   0\to \Omega^1_{\PP^{d+1}}(-d) \to \Omega^1_{\PP^{d+1}}(n) \to \Omega^1_{\PP^{d+1}}(n)|_X\to 0,
   \]
   we have $H^{d}(X, \Omega^1_{\PP^{d+1}}(n)|_X)=0$.
   Here we used that $H^{d+1}(\PP^{d+1}, \Omega^1_{\PP^{d+1}}(-d))\cong H^{0}(\PP^{d+1}, T\PP^{d+1}(-2))^*=0$
   and Bott vanishing on $\PP^{d+1}$.
   By the exact sequence
   \[
   0\to \sO_{X}(-d) \to \Omega^1_{\PP^{d+1}}(n)|_X \to \Omega^1_{X}(n)\to 0, 
   \]
   we have $H^{d-1}(X, \Omega_X^{1}(n))\neq 0$, as desired.
\end{proof}

\begin{prop}\label{prop:Bott on hypersurface}
    Let $X\subset \PP^{d+1}$ be a smooth hypersurface
    of dimension at least 2
    over an algebraically closed field.
    If $X$ satisfies Bott vanishing, then $X\cong \PP^{d}$
    or $X$ is the quadric surface $\PP^1\times \PP^1$.
\end{prop}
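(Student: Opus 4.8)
The plan is to organize the argument by the degree $e$ of $X$, writing $\PP=\PP^{d+1}$, $S=k[x_0,\dots,x_{d+1}]$, letting $f\in S_e$ be the defining form, and $f_i=\partial f/\partial x_i$. If $e=1$ then $X\cong\PP^d$, one of the allowed conclusions, so I may assume $e\ge 2$ and aim to show that Bott vanishing then fails, with the single exception of the quadric surface. Two ranges of degrees are immediate. When $\dim X=d=2$, any $X$ of degree $e\ge 3$ fails Bott vanishing by Lemma~\ref{lem:Calabi-Yau hypersurface} (applied with $n=e-2\ge 1$), while $e=2$ gives the quadric surface $\PP^1\times\PP^1$; this settles the surface case. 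For $d\ge 3$ and $e\ge d+1$, Lemma~\ref{lem:Calabi-Yau hypersurface} again applies (with $n=e-d\ge 1$) and produces a nonzero $H^{d-1}(X,\Omega^1_X(e-d))$ with $e-d>0$. So the heart of the matter is the range $2\le e\le d$ with $d\ge 3$.

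For this range I would compute the single group $H^1(X,\Omega^{d-1}_X(a))$ for a suitable ample twist $a>0$. Using $\Omega^{d-1}_X\cong T_X\otimes\omega_X$ and $\omega_X\cong\sO_X(e-d-2)$, this equals $H^1(X,T_X(a+e-d-2))$. I would evaluate it from the conormal sequence $0\to\sO_X(-e)\to\Omega^1_{\PP}|_X\to\Omega^1_X\to 0$ together with the restricted Euler sequence $0\to\sO_X\to\sO_X(1)^{\oplus(d+2)}\to T_{\PP}|_X\to 0$. Because $d\ge 3$, line bundles on $X$ have no intermediate cohomology, so $H^1(X,T_{\PP}|_X(s))=0$ for all $s$, and the computation collapses to an identification
\[
H^1(X,\Omega^{d-1}_X(a))\;\cong\;\operatorname{coker}\Big(H^0(X,T_{\PP}|_X(s))\to H^0(X,\sO_X(e+s))\Big)\;=\;\bar R_{\,a+2e-d-2},
\]
where $s=a+e-d-2$ and $\bar R:=S/(f,f_0,\dots,f_{d+1})$. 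Since $X$ is smooth, $f,f_0,\dots,f_{d+1}$ have no common zero in $\PP^{d+1}$, so $\bar R$ is a finite-dimensional graded $k$-algebra with $\bar R_0=k$.

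It then remains to exhibit, for each $(d,e)$ in our range, a positive twist $a$ with $\bar R_{\,a+2e-d-2}\neq 0$. For the quadric ($e=2$) I would take $a=d-2\ge 1$, landing in degree $0$, where $\bar R_0=k\neq 0$; this is precisely why the quadric fails Bott vanishing once $d\ge 3$ but not when $d=2$. For $3\le e\le d$ I would instead take $a=d+2-e\ge 2$, so the twist lands in degree $e$ and $H^1(X,\Omega^{d-1}_X(d+2-e))=H^1(X,T_X)=\bar R_e$; here the identification of $H^0(X,T_{\PP}|_X)$ as a quotient of $H^0(\sO_X(1))^{\oplus(d+2)}$ of dimension $(d+2)^2-1$ yields the clean lower bound $\dim_k\bar R_e\ge\binom{d+1+e}{e}-(d+2)^2$, which is strictly positive throughout $3\le e\le d$. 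In every case $H^1(X,\Omega^{d-1}_X(a))\neq 0$ with $a>0$, contradicting Bott vanishing, so $e=1$ and $X\cong\PP^d$.

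The main obstacle I anticipate is making the identification with $\bar R$ robust in every characteristic. The subtlety is that the Euler relation $\sum_i x_if_i=e\cdot f$ degenerates when $\operatorname{char}k$ divides $e$, so one cannot simply replace $\bar R$ by the Jacobian ring $S/(f_0,\dots,f_{d+1})$, which need not even be Artinian in that case; working throughout with $\bar R=S/(f,f_0,\dots,f_{d+1})$ is what keeps the argument valid, its finiteness being guaranteed exactly by the smoothness of $X$. The only other point needing care is the bookkeeping that the chosen twist $a$ is genuinely positive and lands in a nonzero graded piece; the two facts $\bar R_0=k$ and $\dim_k\bar R_e\ge\binom{d+1+e}{e}-(d+2)^2>0$ are what make this step, in the words of the introduction, straightforward.
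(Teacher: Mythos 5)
Your proof is correct, and in the core range $2\le e\le d$, $d\ge 3$ it takes a genuinely different route from the paper. The paper's proof is three citations long: it disposes of Fano hypersurfaces of degree at least $3$ via the rigidity observation that Bott vanishing forces $H^1(X,TX)=H^1(X,\Omega^{d-1}_X(-K_X))=0$ (implicitly using that such hypersurfaces deform nontrivially, in any characteristic), it quotes Achinger--Witaszek--Zdanowicz (Example 3.2.6 of their paper) for the failure on quadrics of dimension at least $3$, and it handles everything of degree $\ge d+1$ by Lemma \ref{lem:Calabi-Yau hypersurface}, exactly as you do for the high-degree and surface ranges. You replace the first two citations by a single self-contained computation: for $d\ge 3$, the absence of intermediate cohomology of line bundles on a hypersurface collapses the normal-bundle and Euler sequences to
\[
H^1(X,\Omega^{d-1}_X(a))\cong \bar R_{\,a+2e-d-2},\qquad \bar R=S/(f,f_0,\dots,f_{d+1}),
\]
and the two instances $\bar R_0=k$ (quadrics, $a=d-2\ge 1$) and $\dim_k\bar R_e\ge\binom{d+1+e}{e}-(d+2)^2>0$ (degrees $3\le e\le d$, $a=d+2-e$, which is exactly the statement $H^1(X,TX)\neq 0$) finish the argument; I checked the cohomological bookkeeping ($H^1(X,T_{\PP}|_X(s))=0$ needs $H^2(X,\sO_X(s))=0$, hence $d\ge 3$; surjectivity of $H^0(\sO_X(s+1))^{\oplus(d+2)}\to H^0(X,T_{\PP}|_X(s))$ needs only $d\ge 2$) and the positivity of the binomial bound throughout the range, and both are fine. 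What your version buys is uniformity and explicit characteristic-independence: keeping $f$ in the ideal rather than using the Jacobian ring $S/(f_0,\dots,f_{d+1})$ is exactly right, since the latter need not be Artinian when $\chara\,k$ divides $e$ (e.g.\ a smooth quadric in an odd number of variables in characteristic $2$), and you thereby prove, rather than assume, the non-rigidity input that the paper leaves implicit. What the paper's version buys is brevity. One cosmetic remark: your $\bar R$ computation also covers $e=d+1$ (take $a=1$), so for $d\ge 3$ you only genuinely need Lemma \ref{lem:Calabi-Yau hypersurface} when $e\ge d+2$, matching the paper's remark that the lemma is mostly relevant for the Calabi--Yau and general-type cases.
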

\begin{proof}
    A Fano variety that satisfies Bott vanishing
    is rigid, since $H^1(X,TX)=H^1(X,\Omega^{d-1}_X(-K_X))=0$,
    where $d=\dim(X)$.
    That excludes all Fano hypersurfaces of degree at least 3
    \cite[equation 5.21]{Kodaira-book}.
    Bott vanishing also fails for quadrics
    of dimension at least 3 \cite[Example 3.2.6]{AWZ}.
    Finally, Lemma
    \ref{lem:Calabi-Yau hypersurface} shows that Bott
    vanishing fails for all non-Fano hypersurfaces
    of dimension at least $2$.
    (This is mostly relevant for the Calabi-Yau case.
    Indeed, Bott vanishing fails
    for all varieties of positive dimension $d$ with ample
    canonical class, since $H^d(X,\omega_X)=k\neq 0$.)
    \end{proof}

\subsection{Fano threefolds}

In this subsection, we prove that projective space is the only Fano threefold of Picard number 1 that satisfies
Bott vanishing, in any characteristic.
Throughout this subsection, we use the following convention.
\begin{conv} 
Let $k$ be an algebraically closed field of positive
characteristic.
We denote $W(k)$ the ring of Witt vectors and $K$ the field
of fractions of $W(k)$.
For a proper scheme $X$ over $k$, we say that a scheme
$\tilde{X}$ over $W(k)$ is a \textit{lift of $X$} if
$\tilde{X}\otimes_{W(k)} k \cong X$ and $\tilde{X}$
is flat and proper over $W(k)$.
For a lift $\tilde{X}$ and a Cartier divisor $\tilde{A}$ on
$\tilde{X}$, we denote the geometric generic fiber of
$\tilde{X}\to \Spec\,W(k)$ by $\tilde{X}_{\overline{K}}$ and
the pullback of $\tilde{A}$ to $\tilde{X}_{\overline{K}}$ by
$\tilde{A}_{\overline{K}}$.

\end{conv}

\begin{defn}
    Let $X$ be a smooth Fano variety
    over an algebraically closed field.
    The \textit{Fano index} $r(X)\in \Z_{>0}$ of $X$ is the largest integer $n\in\Z_{>0}$ such that $-K_X\sim nA$ for some Cartier divisor $A$.
\end{defn}

\begin{lem}\label{lem:Fano index}
    Let $X$ be a smooth Fano variety with $d\coloneqq \dim\,X$.
    Suppose that $X$ satisfies Kodaira vanishing.
    If $r(X)\geq d+1$, then $X\cong \PP^{d}$.
\end{lem}
\begin{proof}
    The proof of \cite[Proposition 4]{Megyesi} works in any dimension, since we assume that $X$ satisfies Kodaira vanishing.
\end{proof}

\begin{prop}\label{prop:lift of Fano}
    Let $X$ be a smooth Fano variety
    over an algebraically closed field $k$ of positive characteristic.
    Suppose there exists a lift $\tilde{X}$ over $W(k)$ of $X$.
    In addition, assume that $H^1(X, \sO_X)=H^2(X, \sO_X)=0$.
    Then the following statements hold.
    \begin{enumerate}
        \item[\textup{(1)}] The specialization map $\mathrm{sp}\colon\Pic(\tilde{X}_{\overline{K}})\to \Pic(X)$ is an isomorphism of abelian groups.  
        \item[\textup{(2)}] $r(X)=r(\tilde{X}_{\overline{K}})$.
        \item[\textup{(3)}] Suppose that the Picard number of $X$ is 1. If $X$ satisfies Bott vanishing, then so does $\tilde{X}_{\overline{K}}$.
    \end{enumerate}
\end{prop}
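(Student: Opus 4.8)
The plan is to prove the three parts of Proposition \ref{prop:lift of Fano} in order, since each relies on the previous one. For part (1), I would use the vanishing $H^1(X,\sO_X)=H^2(X,\sO_X)=0$ to deform the Picard scheme. The hypothesis $H^1(X,\sO_X)=0$ forces the Picard scheme of each fiber to be étale (the tangent space $H^1(\sO)$ to $\Pic^0$ vanishes), and $H^2(X,\sO_X)=0$ kills the obstruction to deforming line bundles, so every line bundle on the special fiber $X$ lifts to $\tilde{X}$. Concretely, I would invoke the theory of the relative Picard functor: since $\tilde{X}\to \Spec W(k)$ is smooth and proper with the stated cohomology vanishing, the specialization map $\Pic(\tilde{X}_{\overline{K}})\to \Pic(X)$ is an isomorphism. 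The key technical point is that $H^1(\sO)=0$ makes $\Pic$ a twisted constant group scheme and $H^2(\sO)=0$ (together with smoothness over a complete local base) gives surjectivity via formal lifting followed by Grothendieck existence.

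For part (2), I would argue that the Fano index is determined by the Picard group together with the class of $-K_X$, both of which are preserved under the specialization isomorphism from part (1). Since $\tilde{X}$ is smooth and proper over $W(k)$, the relative canonical bundle $K_{\tilde{X}/W(k)}$ restricts to $K_X$ on the special fiber and to $K_{\tilde{X}_{\overline{K}}}$ on the geometric generic fiber, so the specialization map sends $-K_{\tilde{X}_{\overline{K}}}$ to $-K_X$. The Fano index $r(X)$ is the largest $n$ such that $-K_X$ is divisible by $n$ in $\Pic(X)$; because $\mathrm{sp}$ is a group isomorphism matching these anticanonical classes, divisibility is preserved in both directions, giving $r(X)=r(\tilde{X}_{\overline{K}})$.

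For part (3), I would exploit the Picard number 1 hypothesis to reduce Bott vanishing to a statement about the cohomology of a single ample generator and then use semicontinuity. By part (1), $\Pic(\tilde{X}_{\overline{K}})\cong \Pic(X)\cong \Z$, generated by some ample class that spreads out to a relatively ample line bundle $\tilde{A}$ on $\tilde{X}$ whose restriction to $X$ generates $\Pic(X)$. For each fixed $i>0$, $j\geq 0$, and each positive multiple $A=\tilde{A}^{\otimes m}$, I would consider the coherent sheaf $\Omega^j_{\tilde{X}/W(k)}\otimes \sO_{\tilde{X}}(m\tilde{A})$ on $\tilde{X}$ and apply upper semicontinuity of $h^i$ for the flat family over $\Spec W(k)$: since $H^i(X,\Omega^j_X(mA))=0$ on the special fiber (Bott vanishing for $X$, noting every ample Cartier divisor on $X$ is a positive multiple of the generator), semicontinuity forces $H^i(\tilde{X}_{\overline{K}},\Omega^j_{\tilde{X}_{\overline{K}}}(m\tilde{A}_{\overline{K}}))=0$ on the geometric generic fiber. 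Because $\tilde{X}_{\overline{K}}$ also has Picard number 1 with ample generator $\tilde{A}_{\overline{K}}$, every ample Cartier divisor there is a positive multiple of $\tilde{A}_{\overline{K}}$, so this establishes Bott vanishing for $\tilde{X}_{\overline{K}}$.

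The main obstacle I anticipate is part (1): carefully justifying that the specialization map is an isomorphism requires the full deformation theory of line bundles over the non-reduced base $W(k)$, combining the triviality of $\Pic^0$ (from $H^1(\sO)=0$) with the unobstructedness of lifting (from $H^2(\sO)=0$) and then passing from formal lifts over $\Spec W(k)/(p^n)$ to an actual line bundle on $\tilde{X}$ via Grothendieck's algebraization theorem. Once part (1) is in hand, parts (2) and (3) are comparatively formal, with the only subtlety being the correct invocation of semicontinuity across the mixed-characteristic family and the reduction, using Picard number 1, of Bott vanishing to finitely many cohomology groups indexed by the generator.
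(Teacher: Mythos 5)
Your proposal is correct, and parts (1) and (2) follow essentially the paper's own route: injectivity of $\mathrm{sp}$ from $H^1(X,\sO_X)=0$ (the paper phrases this via torsion-freeness of $\Pic(\tilde{X}_{\overline{K}})$ and the injectivity criterion for the specialization map, rather than \'etaleness of the Picard scheme, but the substance is the same), surjectivity from $H^2(X,\sO_X)=0$ via deformation of line bundles plus Grothendieck existence, and (2) as formal bookkeeping with the anticanonical class through the isomorphism $\mathrm{sp}$. Where you genuinely diverge is part (3), and your direction of travel is opposite to the paper's. The paper starts with an \emph{arbitrary} ample Cartier divisor $\overline{A}$ on the generic fiber, specializes it to $A$ on $X$, and then must show $A$ is ample; since ampleness does not automatically pass from the generic to the special fiber, the paper uses the trick of choosing $m\gg0$ with $h^0(\tilde{X}_{\overline{K}},\sO(m\tilde{A}_{\overline{K}}))>1$, applying semicontinuity to get $h^0(X,\sO_X(mA))>1$, and invoking Picard number 1 of $X$ to conclude $A$ is ample. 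You instead lift the ample generator of $\Pic(X)$ upward; this works because ampleness \emph{does} propagate from the closed fiber to the generic fiber over the local base $\Spec W(k)$ (the locus where a line bundle is fiberwise ample is open, and the only open set containing the closed point is everything), after which Picard number 1 of $\tilde{X}_{\overline{K}}$ (available from part (1)) shows every ample divisor there is a positive multiple of $\tilde{A}_{\overline{K}}$. Two small points you should make explicit: the identification $\Pic(X)\cong\Z$ uses torsion-freeness, which is not automatic in characteristic $p$ but follows from your part (1) since $\Pic(\tilde{X}_{\overline{K}})$ is free; and the relative ampleness of $\tilde{A}$ deserves the one-line openness argument above. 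With those added, your version of (3) is a clean alternative: it trades the paper's $h^0>1$ semicontinuity trick for the openness of the ample locus, while both arguments rest on the same upper-semicontinuity input for the groups $H^i(\Omega^j(mA))$ across the flat family.
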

\begin{rem}
    For a smooth Fano threefold $X$, Shepherd-Barron \cite[Corollary 1.5]{SB97} proved that $H^1(X, \sO_X)=H^2(X, \sO_X)=0$ (see also \cite[Corollary 3.7]{Kaw2} for the case where $p=2$ or $3$).
    Also, for a smooth Fano variety $X$ of any dimension
    that satisfies Bott vanishing (hence Kodaira
    vanishing), we have $H^i(X,\sO_X)=0$ for $i>0$. See
    \cite[Theorem 1.1, Proposition 6.3]{Gounelas-Javanpeykar}
    for what is known about the Fano index in families
    without assuming
    that $H^1(X, \sO_X)=H^2(X, \sO_X)=0$.
\end{rem}
\begin{proof}
    First, we prove (1). Since smoothness and ampleness are open properties in a flat proper family,
    the geometric generic fiber $\tilde{X}_{\overline{K}}$ is a smooth Fano variety and $\Pic(\tilde{X}_{\overline{K}})\cong \mathrm{NS}(\tilde{X}_{\overline{K}})$ is a free $\Z$-module, where $\mathrm{NS}(X)$ denotes the N\'eron-Severi group.
    Let $\mathrm{sp}\colon \Pic(\tilde{X}_{\overline{K}})\to \Pic(X)$ be the specialization map (see the proof of \cite[Proposition 3.3]{MP12} for the construction).
    Since $H^1(X,\sO_X)=0$, we have $\Pic(X)=\mathrm{NS}(X)$ by \cite[Theorem 9.5.11]{FAG}. The Picard group
    of the smooth Fano variety $\tilde{X}_{\overline{K}}$
    in characteristic zero is torsion-free;
    for a quick proof,
    see \cite[Introduction]{Fanelli-unusual}.
    Next, \cite[Proposition 3.6]{MP12} gives that
    the specialization $NS(\tilde{X}_{\overline{K}})\to
    NS(X)$ is injective. Since $\Pic(X_{\overline{K}})
    =NS(X_{\overline{K}})$ and $\Pic(X)=NS(X)$ in our case,
    it follows that
    $\mathrm{sp}\colon \Pic(\tilde{X}_{\overline{K}})\to \Pic(X)$
    is injective.
    Since $H^2(X, \sO_X)=0$, the specialization map
    $\mathrm{sp}$ is also surjective
    \cite[Corollary 8.5.6]{FAG}.
    Thus (1) holds.

    Next, we prove (2). By the definition of $r(\tilde{X}_{\overline{K}})$, 
    we can take an ample Cartier divisor $\overline{A}$ on $\tilde{X}_{\overline{K}}$ such that 
    $-K_{\tilde{X}_{\overline{K}}}\sim r(\tilde{X}_{\overline{K}})\overline{A}$.
    Then we have 
    \[
    -K_X\sim\mathrm{sp}(-K_{\tilde{X}_{\overline{K}}})\sim \mathrm{sp}(r(\tilde{X}_{\overline{K}})\overline{A})=r(\tilde{X}_{\overline{K}})\mathrm{sp}(\overline{A}),
    \]
    which shows that $r(\tilde{X}_{\overline{K}})\leq r(X)$.

    By the definition of $r(X)$, we can take an ample Cartier divisor $A$ on $X$ such that $-K_X\sim r(X)A$.
    Let $\tilde{A}\in\Pic(\tilde{X})$ be a lift of $A$.
    Then we obtain $-K_{\tilde{X}_{\overline{K}}}\sim r(X)\tilde{A}_{\overline{K}}$, which shows that $r(X)\leq r(\tilde{X}_{\overline{K}})$.
    Thus, (2) holds.

Finally, we prove (3).
    Take an ample Cartier divisor $\overline{A}$ on $\tilde{X}_{\overline{K}}$
    and fix $i>0$ and $j\geq 0$. We prove
    \[
    H^i(\tilde{X}_{\overline{K}}, \Omega_{\tilde{X}_{\overline{K}}}^j(\overline{A}))=0.
    \]
    Let $A\coloneqq \mathrm{sp}(\overline{A})$. 
    Then $\overline{A}\sim\tilde{A}_{\overline{K}}$ for a lift $\tilde{A}$ of $A$ by the argument in (1).
    Since $\overline{A}$ is ample, we can take $m\gg 0$ such that $h^0(\tilde{X}_{\overline{K}}, \sO_{\tilde{X}_{\overline{K}}}(m\tilde{A}_{\overline{K}}))>1$, and upper semi-continuity (\cite[Theorem III.12.8]{Har}) shows that $h^0(X, \sO_X(mA))>1$.
    Since the Picard number of $X$ is 1, it follows that $A$ is ample.
    By assumption, we have
    \[
    H^i(X, \Omega_X^j(A))=0,
    \]
    and upper semi-continuity shows the desired vanishing
    in characteristic zero. Thus, (3) holds.
\end{proof}

   We use Proposition \ref{prop:lift of Fano}
   to reduce the following Proposition
   to the case of characteristic zero. 

\begin{prop}\label{prop:rank one}
    Let $X$ be a smooth Fano threefold of Picard rank 1
    over an algebraically closed field $k$
    such that $X$ satisfies Bott vanishing.
    Then $X$ is isomorphic to projective space.
\end{prop}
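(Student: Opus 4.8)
The plan is to split on the characteristic of $k$, handling $\chara(k)=p>0$ by lifting $X$ to characteristic zero and transporting Bott vanishing—rather than the endomorphism—across the lift.

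First suppose $\chara(k)=0$. Then the assertion is exactly that $\PP^3$ is the only smooth Fano threefold of Picard number $1$ satisfying Bott vanishing. This follows from the classification of smooth Fano threefolds \cite{Algebraic-Geometry-V} together with the determination of which of them satisfy Bott vanishing \cite[Theorem 0.1]{Totaro}. (As a sanity check, a Fano threefold satisfying Bott vanishing is rigid, since $H^1(X,TX)=H^1(X,\Omega^2_X(-K_X))=0$, which already removes the Picard-number-one families that move in moduli; the remaining cases, including the quadric threefold, are ruled out by direct computation as in Proposition \ref{prop:Bott on hypersurface}.)

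So assume from now on that $\chara(k)=p>0$; the crux is that $X$ lifts to characteristic zero. Because $X$ is Fano and satisfies Bott vanishing, taking $j=d=3$ and $A=-K_X$ in Definition \ref{def:Bott vanishing} gives Kodaira vanishing $H^i(X,\sO_X)=H^i(X,\Omega^3_X(-K_X))=0$ for $i>0$; in particular $H^1(X,\sO_X)=H^2(X,\sO_X)=0$. Moreover, since $\dim X=3$ we have $TX\cong\Omega^2_X(-K_X)$, and $-K_X$ is ample, so Bott vanishing yields $H^2(X,TX)=0$. Deformation theory then shows $X$ is unobstructed: the obstruction to lifting $X$ across each small extension $W_{n+1}(k)\to W_n(k)$ lies in $H^2(X,TX)=0$, and the obstruction to lifting an ample line bundle lies in $H^2(X,\sO_X)=0$. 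Hence the pair $(X,-K_X)$ lifts compatibly over every $W_n(k)$, and since the lifted line bundle is ample, Grothendieck's algebraization theorem produces a flat projective lift $\tilde{X}$ over $W(k)$.

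Now I would invoke Proposition \ref{prop:lift of Fano}, whose hypotheses $H^1(X,\sO_X)=H^2(X,\sO_X)=0$ were just verified. By part (1) the specialization map $\Pic(\tilde{X}_{\overline{K}})\xrightarrow{\ \sim\ }\Pic(X)$ is an isomorphism, so $\tilde{X}_{\overline{K}}$ is a smooth Fano threefold of Picard number $1$ over the algebraically closed field $\overline{K}$ of characteristic zero; by part (3) it again satisfies Bott vanishing. The characteristic-zero case above then forces $\tilde{X}_{\overline{K}}\cong\PP^3_{\overline{K}}$, whence $r(\tilde{X}_{\overline{K}})=4$. Finally, part (2) gives $r(X)=r(\tilde{X}_{\overline{K}})=4\geq d+1$, and Theorem \ref{thm:Fano index} yields $X\cong\PP^3$. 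The main obstacle is the lifting step: endomorphisms are hard to lift to characteristic zero, which is precisely why the argument transports the cohomological condition of Bott vanishing (stable under generization by Proposition \ref{prop:lift of Fano}) rather than the endomorphism itself, and the technical heart is checking unobstructedness through $H^2(X,TX)=0$ and algebraizing the formal lift via the lifted ample class.
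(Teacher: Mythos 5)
Your proposal is correct and follows essentially the same route as the paper's proof: deduce $H^1(X,\sO_X)=H^2(X,\sO_X)=H^2(X,TX)=0$ from Bott vanishing, lift $X$ over $W(k)$ (the deformation-theoretic details you spell out are what the paper invokes by citation), transport the Picard number and Bott vanishing to the geometric generic fiber via Proposition \ref{prop:lift of Fano}(1),(3), and conclude from $r(X)=r(\tilde{X}_{\overline{K}})=4$ and Theorem \ref{thm:Fano index}. The only cosmetic difference is in characteristic zero, where you cite \cite[Theorem 0.1]{Totaro} while the paper instead runs a Riemann--Roch count of $\chi(X,TX)$ against the tables of \cite{Algebraic-Geometry-V} and rules out $V_5$ and the quadric directly; both are valid, as Remark \ref{rem:Bott by Totaro} indicates.
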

\begin{proof}
    \textbf{Step 1} (characteristic zero case).
    Assume that $k$ has characteristic zero.
    By Bott vanishing, we have $H^i(X,TX)=H^i(X, \Omega_X^{2}(-K_X))=0$ for all $i>0$.
    By the Hirzebruch–Riemann–Roch theorem (cf.~\cite[Proof of Theorem 7.4]{AWZ2}), we have
    \[
    0\leq h^0(X, TX)=\chi(X, TX)=\frac{1}{2}(-K_X)^3-18+\rho(X)-h^1(X, \Omega^2_X), 
    \]
    where $\rho(X)$ is the Picard number of $X$.
    Then by \cite[Tables 12.2]{Algebraic-Geometry-V}, it follows that $X$ is isomorphic to the quintic
    del Pezzo threefold
    $V_5$ (a smooth codimension-3 linear section
    of $\Gr(2,5)\subset \PP^9$), the quadric threefold $Q$, or $\PP^3$.
    Since $V_5$ and $Q$ do not satisfy Bott vanishing by
    \cite[Lemma 7.10]{AWZ2} and 
    \cite[subsection 4.1]{BTLM} (or Proposition
    \ref{prop:Bott on hypersurface}), it follows that $X\cong \PP^3$.\\

    \textbf{Step 2} (positive characteristic case). 
    Assume that $k$ has positive characteristic.
    Since $X$ is Fano and satisfies Bott vanishing, we have 
    \[
    H^i(X, TX)=H^i(X, \sO_X)=0
    \]
    for all $i>0$. 
    By \cite[Theorem 9.5.11]{FAG}, we can take a lift $\tilde{X}$ over $W(k)$ of $X$, and the geometric generic fiber $\tilde{X}_{\overline{K}}$ is a smooth Fano threefold.
    By Proposition \ref{prop:lift of Fano} (1) and (3), the Picard number of $\tilde{X}_{\overline{K}}$ is 1 and $\tilde{X}_{\overline{K}}$ satisfies Bott vanishing.
    Thus, by Step 1, we have $\tilde{X}_{\overline{K}}\cong \PP^3_{\overline{K}}$. By Proposition \ref{prop:lift of Fano}(2), we have $r(X)=r(\tilde{X}_{\overline{K}})=4$, and we conclude that $X\cong \PP^3_k$ by Lemma
    \ref{lem:Fano index}.
\end{proof}

\subsection{Images of toric varieties}

Let $f\colon Y\to X$ be a morphism from a projective toric variety $Y$ onto a smooth projective variety $X$ of Picard number 1.
In characteristic zero, generalizing Lazarsfeld’s result
on images of projective space
\cite[Theorem 4.1]{Lazarsfeld1983}, Occhetta-Wi\'{s}niewski \cite[Theorem 1.1]{Occhetta-Wisniewski} proved that $X$ is isomorphic to projective space.

This result does not extend to characteristic $p>0$
in full generality.
For example, in characteristic 2, there is a finite purely inseparable morphism from $\PP^3$ onto a smooth quadric
threefold \cite[Proposition 2.5]{Ekedahl}.
However, Occhetta-Wi\'{s}niewski's proof does work
without change
for {\it separable }morphisms in positive characteristic.
That is:

\begin{thm}\label{thm:occhetta}
    Let $X$ be a smooth projective variety of Picard
    number 1 over an algebraically closed field.
    Let $Y$ be a proper toric variety. If there is
    a separable morphism $Y\to X$,
    then $X$ is isomorphic to projective space.
\end{thm}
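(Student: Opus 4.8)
The plan is to reduce Theorem \ref{thm:occhetta} to the characteristic-zero result of Occhetta-Wi\'sniewski by showing that their proof uses only inputs that are available for separable morphisms in arbitrary characteristic. The key structural fact behind their argument is that a dominant morphism $f\colon Y\to X$ from a toric variety forces $X$ to be covered by rational curves whose deformations are tightly constrained; combined with the Picard number one hypothesis, one shows that $X$ carries a family of rational curves of minimal anticanonical degree large enough to force $X\cong\PP^d$ via a Cho-Miyaoka-Shepherd-Barron--type characterization (or, equivalently, by producing an unsplit dominating family whose degree saturates Theorem \ref{thm:Fano index}). So the first thing I would do is isolate precisely where characteristic zero is used in \cite{Occhetta-Wisniewski}: it enters through generic smoothness (to guarantee that $f$ pulls back differentials injectively and that the ramification is controlled) and through bend-and-break for rational curves.

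\emph{First}, I would arrange the toric source. Since the statement allows any proper toric $Y$, I would pass to an equivariant resolution to assume $Y$ is a smooth projective toric variety; a morphism from the resolution to $X$ is still separable and still dominant, and proving $X\cong\PP^d$ for the resolved source suffices. On a smooth toric variety, the torus orbits give an abundant supply of torus-invariant rational curves sweeping out $Y$, and each such curve maps to a rational curve on $X$ (or to a point). \emph{Second}, I would use separability: because $f$ is separable, by the characterization recalled in the Notation section the derivative $df$ is surjective at a general point of $Y$, so generic smoothness holds and $f^*\Omega^1_X\to\Omega^1_Y$ is injective on a dense open set. This is exactly the ingredient that fails for the inseparable $\PP^3\to Q^3$ example in characteristic $2$, and it is the hypothesis that makes the Occhetta-Wi\'sniewski mechanism go through verbatim: the positivity of $T_X$ pulled back to the invariant curves on $Y$ is read off from the positivity of the toric tangent sheaf along those curves, and separability guarantees this pullback does not degenerate.

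\emph{Third}, I would run the rational-curve argument: the images of the invariant curves give a dominating family of rational curves on $X$, and Picard number one lets me take a minimal such family; an unsplitting/bend-and-break analysis (valid in all characteristics, using Mori's original characteristic-$p$ bend-and-break) bounds the anticanonical degree of a minimal free rational curve from below by $\dim X+1$, which by Theorem \ref{thm:Fano index} forces $X\cong\PP^{\dim X}$. The cleanest route is to show that through a general point of $X$ there is a rational curve $C$ with $-K_X\cdot C\ge\dim X+1$ coming from an invariant line on the smooth toric $Y$, and then invoke the Cho-Miyaoka-Shepherd-Barron criterion (which holds in arbitrary characteristic) to conclude projective space directly.

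The hard part will be verifying that the degree/positivity estimate on the invariant rational curves survives in positive characteristic once separability is assumed, rather than simply asserting that ``the proof works without change.'' Specifically, Occhetta-Wi\'sniewski's control of the splitting type of $f^*T_X$ along the invariant curves relies on generic smoothness and on the Fano-type positivity of $-K_X$; I would need to check that the only characteristic-zero input in that splitting analysis is generic smoothness (guaranteed here by separability) and that the remaining deformation theory of rational curves is the characteristic-free part of Mori theory. If any step secretly used Kodaira-type vanishing or semicontinuity in a way special to characteristic zero, I would have to replace it with the bend-and-break plus boundedness package, which is where the genuine work would lie.
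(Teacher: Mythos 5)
Your top-level plan coincides with the paper's own ``proof'': the paper gives no independent argument for Theorem \ref{thm:occhetta}, but simply observes that the only characteristic-zero ingredient in \cite{Occhetta-Wisniewski} is generic smoothness, which the separability hypothesis restores. (The paper's genuinely different route via Bott vanishing, Proposition \ref{prop:images}, proves only the threefold case under the stronger hypothesis that the degree is invertible in $k$, and you rightly do not rely on it.) Your reduction to a smooth projective equivariant resolution of $Y$ is fine in any characteristic, and you correctly identify separability as the substitute for generic smoothness and correctly flag the inseparable example $\PP^3\to Q^3$ in characteristic $2$.

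However, where your sketch adds mechanism, the mechanism is not Occhetta--Wi\'sniewski's and parts of it are false. \emph{(i)} A complete toric variety has only \emph{finitely many} torus-invariant curves (one for each wall of the fan), so they do not sweep out $Y$ and their images cannot directly give a dominating family on $X$; moreover $T_Y$ is not positive along invariant curves --- a $(-1)$-curve $C$ on a smooth toric surface has $T_Y|_C\cong \sO(2)\oplus\sO(-1)$, which is not nef. The actual toric input in \cite{Occhetta-Wisniewski} is the Euler--Jaczewski (generalized Euler) sequence, equivalently the triviality of the logarithmic tangent sheaf $T_Y(-\log D)\cong\sO_Y^{\dim Y}$: it exhibits $T_Y$ as a quotient of $\bigoplus_i\sO_Y(D_i)$ with $D_i$ the boundary divisors, which (together with generic surjectivity of $T_Y\to f^*T_X$, i.e.\ separability) yields positivity of $f^*T_X$ along curves \emph{not contained in the boundary}, applied to preimages of minimal rational curves on $X$, not to invariant curves. \emph{(ii)} Your fallback ``degree saturates Theorem \ref{thm:Fano index}'' is a non sequitur: that theorem concerns the divisibility of $-K_X$ in $\Pic(X)$, and the existence of a rational curve with $-K_X\cdot C\ge \dim X+1$ gives no lower bound on the Fano index. \emph{(iii)} The assertion that the Cho--Miyaoka--Shepherd-Barron criterion ``holds in arbitrary characteristic'' is an overstatement: the characteristic-free characterizations of $\PP^n$ in \cite{Kachi-Kollar} carry additional hypotheses (normality/separability or freeness conditions on the family of curves), and verifying these --- i.e.\ ruling out inseparability in the \emph{family} of rational curves, not merely in $f$ --- is exactly the delicate point in characteristic $p$. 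Since you explicitly defer this verification as ``the hard part,'' your proposal as written does not go beyond the paper's assertion that the proof of \cite{Occhetta-Wisniewski} works without change, and the concrete steps it substitutes for that assertion would fail as stated.
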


For possible future use, let us show how
a special case of Theorem \ref{thm:occhetta}
follows from our arguments with Bott vanishing.
One can also
prove a version of Proposition \ref{prop:images} using
Achinger--Witaszek--Zdanowicz's results on images
of $F$-liftable varieties \cite[Theorem 4.4.1]{AWZ}.
For example, they showed that a smooth complex surface
that is an image of a proper toric variety
must be toric \cite[Theorems 2 and 3]{AWZ}.

\begin{prop}\label{prop:images}
    Let $X$ be a smooth projective threefold
    of Picard number 1 over an algebraically closed
    field $k$. Let $Y$ be a proper toric variety
    of the same dimension. If there is a morphism
    $Y\to X$ of degree invertible in $k$,
    then $X$ is isomorphic to projective space.
\end{prop}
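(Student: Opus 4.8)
The plan is to show that $X$ is a smooth Fano threefold of Picard number $1$ satisfying Bott vanishing, and then to invoke Proposition \ref{prop:rank one} to conclude $X\cong\PP^3$.

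First I would check that $X$ is Fano. Since $Y$ is toric it is rational, hence rationally connected, and the dominant image of a rationally connected variety is rationally connected; so $X$ is rationally connected. A smooth rationally connected projective variety has $K_X$ not pseudoeffective, and because $X$ has Picard number $1$ the pseudoeffective cone is the single ray $\R_{\ge 0}H$ spanned by the ample generator $H$; thus $-K_X$ is ample and $X$ is a smooth Fano threefold of Picard number $1$.

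The heart of the argument is that $X$ satisfies Bott vanishing, and here the toric hypothesis on $Y$ is tailor-made, since by Fujino every projective toric variety satisfies Bott vanishing for ample Weil divisors. Concretely, suppose first that $f$ is finite. Then for any ample Cartier divisor $A$ on $X$ the pullback $f^*A$ is ample on $Y$, and since $\deg(f)$ is invertible in $k$ the trace-map splitting used in the proof of Theorem \ref{Introtheo:Bott} produces a split injection $\Omega_X^j(A)\hookrightarrow f_*(\Omega_Y^{[j]}(f^*A))$. Passing to cohomology and using that $f$ is finite (so $f_*$ is exact), we obtain a split injection
\[
H^i(X,\Omega_X^j(A))\hookrightarrow H^i(Y,\Omega_Y^{[j]}(f^*A)).
\]
The right-hand side vanishes for every $i>0$ by toric Bott vanishing, so $X$ satisfies Bott vanishing, and Proposition \ref{prop:rank one} gives $X\cong\PP^3$.

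The step I expect to be the \emph{main obstacle} is the finiteness of $f$. In Proposition \ref{prop:degree bound} finiteness was forced by the Picard number $1$ hypothesis on the source; here $Y$ is only toric and may have larger Picard number, so $f$ can genuinely contract curves (for instance the composite $\mathrm{Bl}_p\PP^3\to\PP^3\to\PP^3$ of a toric blow-down with $[x:y:z:w]\mapsto[x^n:y^n:z^n:w^n]$ has degree $n^3$ but is not finite). To treat a merely generically finite $f$ of degree prime to $p$, I would pass to the Stein factorization $f=h\circ g$ with $h\colon Z\to X$ finite of the same degree and $g$ birational, reducing to Bott vanishing on $Z$ for the ample divisor $h^*A$. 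The delicate point is that one cannot simply transport this further back to the toric $Y$: Bott vanishing on a toric variety holds only for ample divisors and fails for big and nef ones—already on $\FF_1$ one computes $H^1(\FF_1,\Omega^1_{\FF_1}(C_0+F))\neq 0$ for the big and nef class $C_0+F$, where $C_0$ is the $(-1)$-section and $F$ a fiber—whereas $g^*h^*A=f^*A$ is only big and nef on $Y$. Thus the real content beyond the clean finite case is to establish Bott vanishing directly on the tame finite cover $Z$ of $X$.
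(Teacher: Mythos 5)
There is a genuine gap, and it sits exactly where you placed your flag. You correctly observe that $f$ need not be finite (your $\mathrm{Bl}_p\PP^3\to\PP^3$ example is apt), correctly pass to the Stein factorization $Y\to Y'\to X$, and correctly note that one cannot transport vanishing back to $Y$ because toric Bott vanishing fails for the big-and-nef class $f^*A$ --- but then you leave the resulting problem (``establish Bott vanishing directly on the tame finite cover $Z$ of $X$'') unresolved, and that problem is the whole proof. The missing ingredient is a single known input: by \cite[Theorem 1.1]{Tanaka(toric)}, the middle term $Y'$ of the Stein factorization of a morphism from a proper toric variety is itself a normal toric variety. So after replacing $Y$ by $Y'$ you are back in your ``clean finite case,'' with the one caveat that $Y'$ may be singular; this is precisely why the paper uses reflexive differentials $\Omega^{[j]}_{Y'}$, the trace/pullback splitting for finite morphisms of normal varieties from the proof of Theorem \ref{Introtheo:Bott}, and Fujino's Bott vanishing \emph{for ample Weil divisors} on (possibly singular) projective toric varieties \cite[Proposition 3.2]{Fujino-toric}. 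Then $H^i(X,\Omega^j_X(A))\hookrightarrow H^i(Y',\Omega^{[j]}_{Y'}(f^*A))=0$, since $f^*A$ is ample once $f$ is finite. No new vanishing theorem on an abstract tame cover of $X$ is needed.

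Your Fano step also has a characteristic-$p$ gap. The chain ``toric $\Rightarrow$ rationally connected $\Rightarrow$ dominant image rationally connected $\Rightarrow$ $K_X$ not pseudoeffective'' is valid in characteristic zero, but the last implication fails in characteristic $p>0$: it requires (separably) free rational curves, and there exist unirational --- hence rationally chain connected --- smooth projective varieties of general type (Shioda's surfaces, for instance). Since the proposition is asserted over an arbitrary algebraically closed field, you must use the hypothesis that $\deg f$ is invertible in $k$, which makes the finite morphism $f\colon Y'\to X$ separable. Then the ramification formula $f^*(-K_X)=-K_{Y'}+R$ with $R$ effective \cite[equation 2.41.2]{Kol13}, together with the bigness of $-K_{Y'}$ on a projective toric variety, shows $-K_X$ is big, hence ample because $\rho(X)=1$. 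This is the paper's argument and works in every characteristic; your concluding appeal to Proposition \ref{prop:rank one} then matches the paper.
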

\begin{proof}
    Let $Y\to Y'\to X$ be the Stein factorization.
    Replacing $Y$ by $Y'$, we may assume that $f$ is finite and $Y$ is a normal toric variety,
    by \cite[Proposition 2.7]{Tanaka-kv}.

    Let $A$ be an ample Cartier divisor on $X$,
    and fix $i>0$ and $j\geq 0$.
    By the same proof
    as in Proposition \ref{prop:degree bound}
    (pushing forward differential forms),
    we have a split injection
    \[
    H^i(X,\Omega_X^j(A))\hookrightarrow H^i(Y,\Omega^{[j]}_Y(f^{*}A)),
    \]
    where $\Omega^{[j]}_Y$ is the sheaf
    of reflexive differentials,
    $(\Omega_Y^j)^{**}$.
    Since $f^*A$ is ample, Bott vanishing
    for toric varieties gives that the group
    on the right is zero, and so the group
    on the left is zero. That is, $X$ satisfies
    Bott vanishing.

    Since $f$ is separable, we have $f^*(-K_X)=-K_Y+R$
    for an effective divisor $R$, the {\it ramification
    divisor }\cite[equation 2.41.2]{Kol13}.
    Since $-K_Y$ is big and the Picard number of $X$ is 1, it follows
    that $-K_X$ is ample.
    Therefore, $X\cong \PP^3$ by Proposition \ref{prop:rank one}.
\end{proof}

\subsection{Fano fourfolds of index greater than 1}

In this subsection, we prove that projective space
is the only Fano fourfold of Picard number 1
and Fano index greater than 1 that satisfies
Bott vanishing.

\begin{lem}\label{lem:hyperplane}
    Let $Y\subset \PP^N$ be a smooth
    projective variety of dimension at least 2
    over an algebraically
    closed field $k$ of characteristic zero,
    and let $X$ be a smooth hyperplane
    section. 
    Set $-K_Y=\sO_Y(b)$ and assume that
    $b\geq 2$.
    Then the following hold.
    \begin{itemize}
        \item[\textup{(1)}] We have
        \[
        \chi(X,TX)=\chi(Y,TY)-\chi(Y,TY(-1))-h^0(Y,\sO(1))+1.
        \]
        \item[\textup{(2)}] We have
        \[
        \chi(X,TX(-1))=\chi(Y,TY(-1))-\chi(Y,TY(-2))-1.
        \]
        \item[\textup{(3)}] For $2\leq a\leq b-1$, we have
    \[
    \chi(X,TX(-a))=\chi(Y,TY(-a))-\chi(Y,TY(-a-1)).
    \]
    \end{itemize}
    \end{lem}

    \begin{proof}
    We have exact sequences of coherent sheaves: 
    \begin{align*}
        &0\to TY(-1)\to TY\to TY|_X\to 0\,\,\,\text{and}\\
        &0\to TX\to TY|_X\to \sO_X(1)\to 0.
    \end{align*}
    For any integer $a$, it follows that
    \[\chi(X,TX(-a))=\chi(Y,TY(-a))-\chi(Y,TY(-a-1))
    -\chi(X,\sO_X(1-a)).\] Since $K_Y=\sO_Y(-b)$,
    adjunction gives that
    $K_X=(K_Y+X)|_X=\sO_X(1-b)$, and so $X$ is Fano.
    By Kodaira vanishing, we have $H^{>0}(X, \sO_X(1-a))=0$ for all $a\leq b-1$. Also, since $\sO_X(1)$ is ample,
    we have $H^0(X,\sO_X(1-a))=0$ for all $a\geq 2$,
    whereas $h^0(X,\sO_X(1-a))=1$ for $a=1$.
    Therefore, $\chi(X,\sO_X(1-a))$
    is zero for $2\leq a\leq b-1$, and it is 1
    for $a=1$, proving statements (2) and (3).
    For $a=0$, the exact sequence
    $0\to\sO_Y\to\sO_Y(1)\to\sO_X(1)\to 0$ shows
    that $\chi(X,\sO_X(1))=\chi(Y,\sO_Y(1))-1=
    h^0(Y,\sO_Y(1))-1$, proving (1).
    \end{proof}

\begin{prop}\label{prop:index two}
Let $X$ be a smooth Fano fourfold of Picard number 1
and Fano index greater than 1 over an algebraically
closed field $k$. If $X$ satisfies Bott vanishing,
then $X$ is isomorphic to projective space.
\end{prop}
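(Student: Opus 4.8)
The plan is to follow the same strategy used for Fano threefolds in Proposition~\ref{prop:rank one}: reduce the positive-characteristic case to characteristic zero by lifting to the Witt vectors, and then invoke the classification of Fano fourfolds of Picard number $1$ and index $\geq 2$ in characteristic zero. First I would handle the characteristic-zero case directly. Since $X$ is Fano and satisfies Bott vanishing, the rigidity remark gives $H^1(X,TX)=H^1(X,\Omega^{d-1}_X(-K_X))=0$ and also $H^i(X,\sO_X)=0$ for $i>0$. The Fano index $r(X)$ satisfies $2\leq r(X)\leq d+1=5$. If $r(X)=5$, then Theorem~\ref{thm:Fano index} immediately gives $X\cong\PP^4$, so I would assume $r(X)\in\{2,3,4\}$ and derive a contradiction in each remaining case. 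The key input is the known classification (Fujita, Mukai, and others) of smooth Fano varieties of large index: those with $r(X)=d$ are (higher-dimensional) del Pezzo varieties and those with $r(X)=d-1$ are Mukai varieties, and in the Picard-number-one case these are a short explicit list (quadrics, weighted projective or Grassmannian-type linear sections, complete intersections, spinor varieties, etc.). For each such variety I would cite or verify that Bott vanishing fails. Quadrics of dimension $\geq 3$ fail by \cite[Example 3.2.6]{AWZ} (as already used in Proposition~\ref{prop:Bott on hypersurface}), and the remaining homogeneous or near-homogeneous examples either carry nonzero intermediate Hodge cohomology $H^i(X,\Omega^j_X)$ twisted by an ample class, or else fail the rigidity constraint, so each is eliminated.

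Next I would address positive characteristic. As in Step~2 of Proposition~\ref{prop:rank one}, Bott vanishing plus the Fano condition yields $H^i(X,TX)=H^i(X,\sO_X)=0$ for all $i>0$, so by \cite[Theorem 9.5.11]{FAG} there is a lift $\tilde X$ over $W(k)$ with smooth Fano geometric generic fiber $\tilde X_{\overline K}$. By Proposition~\ref{prop:lift of Fano}(1) the Picard number of $\tilde X_{\overline K}$ is again $1$, by part~(3) it satisfies Bott vanishing, and by part~(2) its Fano index equals $r(X)>1$. The characteristic-zero case then forces $\tilde X_{\overline K}\cong\PP^4_{\overline K}$, whence $r(\tilde X_{\overline K})=5$. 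By Proposition~\ref{prop:lift of Fano}(2), $r(X)=5$ as well, and Theorem~\ref{thm:Fano index} gives $X\cong\PP^4_k$.

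The main obstacle is the characteristic-zero input: unlike the threefold case, where \cite[Tables 12.2]{Algebraic-Geometry-V} gives a clean short list via Hirzebruch--Riemann--Roch, the fourfold case requires invoking the Fujita--Mukai classification of Fano fourfolds of Picard number $1$ with index $\geq 2$ and then checking Bott vanishing variety by variety. I expect the quadric and the higher del Pezzo/Mukai fourfolds to be the cases demanding the most care, since one must exhibit a nonvanishing twisted Hodge group for each. One efficient route would be to observe that any such $X$ other than $\PP^4$ is covered by lines and has a nontrivial variety of minimal rational tangents, which tends to produce nonzero $H^i(X,\Omega^j_X(A))$; alternatively, for the homogeneous examples one can compute the relevant cohomology directly via Bott's theorem for the ambient flag variety, as was done for quadrics. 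Assembling these verifications into a uniform statement, rather than a case analysis, is where the real work lies.
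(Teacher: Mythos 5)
Your overall architecture matches the paper's exactly: rigidity from Bott vanishing, the Fujita--Mukai classification in characteristic zero, and reduction from characteristic $p$ via lifting. Your positive-characteristic paragraph is essentially the paper's argument and is complete: Bott vanishing gives $H^i(X,TX)=H^i(X,\sO_X)=0$ for $i>0$, the lift exists, and Proposition \ref{prop:lift of Fano} transfers Picard number, Fano index, and Bott vanishing, after which Theorem \ref{thm:Fano index} finishes. The genuine gap is at the decisive step in characteristic zero. After rigidity cuts the classified list down, the varieties you must eliminate are the quadric fourfold, the quintic del Pezzo fourfold (a codimension-2 linear section of $\Gr(2,5)\subset\PP^9$), and a codimension-2 linear section of the Lagrangian Grassmannian $\LGr(3,6)\subset\PP^{13}$. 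You have a citation for the quadric, but for the two linear sections you offer only heuristics: the claim that a nontrivial variety of minimal rational tangents ``tends to produce'' nonzero twisted Hodge cohomology is not an argument, and Bott's theorem for homogeneous spaces does not apply to these varieties, since proper linear sections of $\Gr(2,5)$ and $\LGr(3,6)$ are not themselves homogeneous. So the step you yourself flag as ``where the real work lies'' is genuinely missing, and neither of your proposed routes obviously closes it.

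The paper closes it with a short Euler-characteristic computation. Lemma \ref{lem:hyperplane} expresses $\chi(X,TX(-a))$ for a hyperplane section $X\subset Y$ in terms of $\chi(Y,TY(-a))$, using Kodaira vanishing to control the correction term $\chi(X,\sO_X(1-a))$; applying it twice to a codimension-2 linear section of $Y$ Fano of index $b\geq 3$ gives
\[
\chi(X,TX(-1))=\chi(Y,TY(-1))-2\chi(Y,TY(-2))+\chi(Y,TY(-3))-2.
\]
Snow's vanishing theorems \cite{Snow-Gr}, \cite{Snow-van} state that $TY(-a)$ has vanishing cohomology in all degrees for $1\leq a\leq b-1$ when $Y=\Gr(2,5)$ (index $5$) or $Y=\LGr(3,6)$ (index $4$), so $\chi(X,TX(-1))=-2$ in both cases. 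Since $TX(-1)$ equals $\Omega^3_X(2)$ in the first case and $\Omega^3_X(1)$ in the second---a twist of $\Omega^3_X$ by an ample line bundle---Bott vanishing would force $\chi(X,TX(-1))=h^0\geq 0$, a contradiction. This is the concrete mechanism your sketch lacks. (A minor additional slip: your indexing of the classification is off by one---index $d$ is the quadric by Kobayashi--Ochiai, del Pezzo fourfolds have index $3=d-1$ and Mukai fourfolds index $2=d-2$---though this does not affect the structure of your argument.)
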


\begin{proof}
    If $k$ has characteristic $p>0$, then
    (as in the proof of Proposition \ref{prop:rank one})
    $X$ lifts to characteristic 0, and the lift
    also satisfies Bott vanishing. So it suffices
    to prove the proposition for $k$ of characteristic zero.
    (That will imply that the lift has Fano index 5,
    so $X$ in characteristic $p$ has Fano index 5
    and satisfies Kodaira vanishing;
    so it is isomorphic to $\PP^4_k$.)

    So assume that $k$ has characteristic zero.
    Then the smooth Fano fourfolds of Picard number 1
    and index greater than 1 were classified
    by Fujita, Mukai, and Wilson
    \cite[Theorem 1.2]{Kuznetsov-Prokhorov},
    \cite[Theorem 2]{Mukai-Fano},
    \cite{Wilson}. The classification is listed
    in Table \ref{fourfold}, where the calculations
    of $\chi(X,TX)$ can be made
    using Lemma \ref{lem:hyperplane}. Since we assume
    that $X$ satisfies Bott vanishing,
    the tangent bundle $TX=\Omega^3_X(-K_X)$
    has zero cohomology in positive degrees,
    and so $\chi(X,TX)\geq 0$.
    (Here we used that on a smooth $d$-dimensional
    variety $X$, we have a dual pairing $\Omega^1_X\times
    \Omega^{d-1}_X\to \Omega^d_X=\sO(K_X)$, and so
    $TX\cong \Omega^{d-1}(-K_X)$.)
    By Table \ref{fourfold},
    $X$ is either
    $\PP^4$, the quadric 4-fold $Q$, a codimension-2
    linear section of the Grassmannian $\Gr(2,5)
    \subset \PP^9$,
    or a hyperplane section of the $G_2$-Grassmannian
    $G_2/P\subset \PP^{13}$.
    In each of these cases other than $\PP^4$, we will
    disprove Bott vanishing by a Riemann-Roch
    calculation.
    \begin{table}
  \centering
  \begin{tabular}{c|c|c|p{0.55\textwidth}}
    Fano index & $\sO(1)^4$ & $\chi(X,TX)$ & Description \\
    \hline
    2 & 2 & $-185$ & sextic in $\PP^5(1^53)$ \\
    2 & 4 & $-90$ & quartic in $\PP^5$ \\
    2 & 6 & $-55$ & complete intersection of quadric and cubic in $\PP^6$ \\
    2 & 8 & $-36$ & complete intersection of three quadrics in $\PP^7$ \\
    2 & 10 & $-24$ & $\Gr(2,5)\cap Q\cap \PP^7\subset \PP^9$ \\
    2 & 12 & $-15$ & $\OGr(5,10)^{+}\cap \PP^9\subset \PP^{15}$ \\
    2 & 14 & $-9$ & $\Gr(2,6)\cap \PP^{10}\subset \PP^{14}$ \\
    2 & 16 & $-3$ & $\LGr(3,6)\cap \PP^{11}\subset \PP^{13}$ \\
    2 & 18 & 1 & $G_2/P\cap \PP^{12}\subset \PP^{13}$ \\
    3 & 1 & $-103$ & sextic in $\PP^5(1^423)$ \\
    3 & 2 & $-45$ & quartic in $\PP^5(1^52)$ \\
    3 & 3 & $-20$ & cubic in $\PP^5$ \\
    3 & 4 & $-4$ & complete intersection of two quadrics in $\PP^6$ \\
    3 & 5 & 8 & $\Gr(2,5)\cap \PP^7\subset \PP^9$ \\
    4 & 2 & 15 & quadric $Q\subset\PP^5$ \\
    5 & 1 & 24 & $\PP^4$ \\
  \end{tabular}
  \caption{The Fano 4-folds of Picard number 1 with Fano index $\geq 2$}
  \label{fourfold}
\end{table}
    
    We know that Bott vanishing
    fails for the quadric fourfold, as mentioned
    in Proposition \ref{prop:Bott on hypersurface}.
    It remains to disprove Bott vanishing
    for the other two fourfolds above.
    First, let $Y$ be $G_2/P\subset \PP^{13}$;
    then $Y$ has dimension 5 
    and $-K_Y=\sO_Y(3)$. Using the Borel-Weil-Bott
    theorem, Konno showed (in characteristic
    zero, as here) that
    $TY(-a)$ has zero cohomology
    in all degrees for $1\leq a\leq 2$
    \cite[Theorem 3.4.1]{Konno}. By Lemma
    \ref{lem:hyperplane}, the Fano fourfold
    $X=G_2/P\cap \PP^{12}\subset \PP^{13}$
    has $\chi(X,TX(-1))=0-1=-1<0$. By adjunction,
    we have $-K_X=\sO(2)$, so
    $\chi(X,\Omega^3(1))=\chi(X,TX(-1))=-1<0$,
    and so $X$ does not
    satisfy Bott vanishing.

    Finally, let $Y$ be $\Gr(2,5)\subset \PP^9$,
    which has dimension 6 and Fano index 5.
    By applying Lemma \ref{lem:hyperplane} twice,
    a codimension-2
    linear section $X$ has
    \[
    \chi(X,TX(-1))=\chi(Y,TY(-1))-2\chi(Y,TY(-2))
    +\chi(Y,TY(-3))-2.
    \]
    Snow showed (in characteristic zero, as here)
    that if $Z$ is a Grassmannian $\Gr(s,t)$
    other than projective space or $\Gr(2,4)$,
    then $-K_Z=\sO_Z(t)$ and $TZ(-a)$ has zero cohomology
    in all degrees for $1\leq a\leq t-1$
    \cite[Theorem 3.4(3)]{Snow-Gr}.
    In particular, $Y=\Gr(2,5)$ has $\chi(Y,TY(-a))=0$
    for $1\leq a\leq 4$. By the formula above,
    the Fano fourfold $X=\Gr(2,5)\cap \PP^7\subset \PP^9$
    has $\chi(X,TX(-1))=-2<0$. By adjunction,
    $-K_X=\sO_X(3)$,
    So $\chi(X,\Omega^3(2))=\chi(X,TX(-1))=-2<0$,
    and so $X$ does not
    satisfy Bott vanishing. This completes the proof
    that if a smooth Fano fourfold of Picard number 1
    and Fano index greater than 1 satisfies Bott vanishing,
    then it is isomorphic to projective space.
\end{proof}

\section{Proof of Theorem \ref{Introthm:rank one}
and Theorem \ref{Introthm:degree}}

\begin{proof}[Proof of Theorems \ref{Introthm:rank one} and \ref{Introthm:degree}]
     In the situation of Theorem \ref{Introthm:rank one} (resp.~\ref{Introthm:degree}), 
     $X$ satisfies
     Bott vanishing by Theorem \ref{Introtheo:Bott}
     (resp.~Proposition \ref{prop:degree bound}),
     and the assertion follows from Propositions
     \ref{prop:Bott on hypersurface}, \ref{prop:rank one},
     and \ref{prop:index two}.
\end{proof}

\section{Failure of Bott vanishing
for separable polarized endomorphisms}
We now show that Bott vanishing can fail
if we only assume that
an int-amplified endomorphism
is separable (rather than of degree invertible in $k$).
This resolves a question in the first version of this paper.
(Bott vanishing obviously fails
for inseparable endomorphisms, since {\it every }projective
variety over a finite
field has the Frobenius endomorphism, which is int-amplified.)

\begin{prop}
\label{prop:separable}
For any prime power $q$ at least 4,
there is a smooth projective
3-fold $X$ over $\FF_q$ such that $X$ has a separable
polarized (hence int-amplified) endomorphism,
but Bott vanishing fails on $X$.
\end{prop}

\begin{proof}
Let $q$ be a prime power at least 4,
and let $X$ be the blow-up of $\PP^3$ over $\FF_q$
at some set of $\FF_q$-points on the plane
$\{w=0\}$. Then $X$
has a separable polarized endomorphism
(inspired by a 2-dimensional
example by Nakayama
\cite[Example 4.5]{Nakayama(endomorphism)}). Namely,
consider the endomorphism of $\PP^3$ given by
$$g([x,y,z,w])=[x^q-xw^{q-1},y^q-yw^{q-1},z^q-zw^{q-1},w^q].$$
Then $g$ is separable,
but it restricts to the Frobenius morphism
on the plane $\{w=0\}$. In particular,
$g$ fixes the given set of {$\FF_q$-}points
in the plane $\{w=0\}$. A direct calculation shows that $g$
lifts to an endomorphism $f$ of the blow-up $X$.
(It suffices to check this over
the point $[1,0,0,0]$ in $\PP^3$, in view of the symmetry
group $GL(3,\FF_q)$ (acting on $x,y,z$)
of the endomorphism $g$.)
Clearly $f$ is separable, since $g$ is.

We now specialize to the case where $X$ is the blow-up
of $\PP^3$ over $\FF_q$ at 5 $\FF_q$-points
in the plane $\{w=0\}$ with no 3 collinear. (This is
possible for $q\geq 4$, as we assumed.) These
points are contained in a unique smooth conic $C$. After
a change of coordinates in $GL(3,\FF_q)$, we can assume
that $C$ is the conic $\{w=0,xy=z^2\}$.
Write $H$ for the pullback to $X$
of the line bundle $\sO_{\PP^3}(1)$
on $\PP^3$,
and $E_1,\ldots,E_5$ for the exceptional divisors. Then
$f^*H=qH$ and $f^*E_j=qE_j$ for $j=1,\ldots,5$. Since $\Pic(X)
=\Z\{H,E_1,\ldots,E_5\}$, $f$ is polarized,
hence int-amplified.

Next, we show that the line bundle
$A\coloneqq 3H-\sum_{j=1}^5E_j$
is ample on $X$.
Indeed, $xw,yw,zw,w^2$, and $xy-z^2$ are sections
of $L\coloneqq 2H-\sum E_j$,
and so the base locus of $L$ is only the strict transform
of the conic $C$ in $\PP^3$. Also, $L$ has positive degree
on every curve
in $E_1,\ldots,E_5$. The base locus of $A=L+H$
is at most the conic $C$,
but the linear system $|A|$
also contains the sum of a plane through $p_1$ and $p_2$,
a plane through $p_1$ and $p_3$, and a plane through $p_4$ and $p_5$;
so $A$ is basepoint-free on $X$.
Since $A=L+H$ has $A\cdot C=1$, $A$ has
positive degree on every curve on $X$.
Together with basepoint-freeness,
this implies that $A$ is ample.

To disprove Bott vanishing on $X$, we will show that
$H^1(X,\Omega^1_X(A))$ is not zero.
Consider the exact sequence
$0\to \sO_X(-\sum_j E_j)\to \sO_X\to \oplus_j \sO_{E_j}\to 0$.
Tensoring with $\Omega^1_X(3H)$
and taking cohomology gives an exact sequence:
\[
H^0(X,\Omega^1_X(3H))\to
\oplus_{j=1}^5 H^0(E_j,\Omega^1_X(3H))
\to H^1(X,\Omega^1_X(A)).
\]
So it suffices to show that the restriction map
on $H^0$ is not
surjective. Since the line bundle $3H$ is pulled back
from $\PP^3$,
the first space is isomorphic
to $H^0(\PP^3,\Omega^1_{\PP^3}(3))$.
Next, each
$E_j$ is isomorphic to $\PP^2$,
and we have an exact sequence on $E\coloneqq E_j$:
$0\to \sO_X(-E)|_E\to \Omega^1_X|_E\to \Omega^1_E\to 0$, where
$\sO_X(-E)|_E=\sO_{E}(1)$ on $E=\PP^2$.
Here $H^0(E,\Omega^1_E)=0$,
and so $h^0(E,\Omega^1_X|_{E})=
h^0(E,\sO_{E}(1))=3$. The line bundle $H$ on $X$ is trivial on each
$E_j$, and so $h^0(E_j,\Omega^1_X(3H))=3$ for $j=1,\ldots,5$.
More canonically,
$H^0(E_j,\Omega^1_X(3H))\cong H^0(p_j,\Omega^1_{\PP^3}(3H))$.
So, to disprove Bott vanishing on $X$,
it suffices to show that the restriction map
$H^0(\PP^3,\Omega^1_{\PP^3}(3H))\to
\oplus_{j=1}^5 H^0(p_j,\Omega^1_{\PP^3}(3H))$
has rank less than $5\cdot 3=15$.

The point is that this restriction map factors through
$H^0(C,\Omega^1_{\PP^3}(3H))$, since $p_1,\ldots,p_5$
lie on the conic $C$.
To analyze that group,
note that the vector bundle $\Omega^1_{\PP^n}(2H)$
is globally generated for any $n$
\cite[equation 7.13]{LazarsfeldII}. As a result,
$\Omega^1_{\PP^3}(3H)$ is ample,
so its restriction
to $C\cong \PP^1$ is ample,
and hence $H^1(C,\Omega^1_{\PP^3}(3H))=0$. By Riemann-Roch,
it follows that
\begin{align*}
h^0(C,\Omega^1_{\PP^3}(3H))&=\chi(C,\Omega^1_{\PP^3}(3H))\\
&=\deg_C(\Omega^1_{\PP^3}(3H))
+\rank(\Omega^1_{\PP^3}(3H))(1-g(C))\\
&=10+3=13.
\end{align*}
So the restriction map
$H^0(\PP^3,\Omega^1_{\PP^3}(3H))\to
\oplus_{j=1}^5 H^0(p_j,\Omega^1_{\PP^3}(3H))$
has rank at most 13, thus less than 15. By the previous paragraph,
this completes the proof that Bott vanishing fails for $X$,
even though $X$ has a separable polarized endomorphism.
\end{proof}

\section{Global F-regularity of Fano varieties with an endomorphism}
Let $X$ be a Fano variety in characteristic $p>0$ that is
strongly $F$-regular (for example, smooth).
If $X$ admits an int-amplified
endomorphism of degree prime to $p$, we will show that $X$
is globally $F$-regular (Theorem \ref{Introtheo:global}).
(It was known to the experts
that a smooth Fano variety satisfying Bott vanishing
must be globally $F$-split,
by the argument sketched in \cite[Exercise 1.6.4]{fbook}. Therefore, when $X$ is smooth, Theorem \ref{Introtheo:global} is an immediate consequence of Theorem \ref{Introtheo:Bott}.)
Intuitively, ``strongly $F$-regular'' is a strong version
of ``klt type'' in characteristic $p$,
and ``globally $F$-regular'' is a strong version
of ``Fano type''.

\begin{defn}\label{defi-gFreg F-pure}
Let $X$ be a normal variety over a perfect field $k$
of characteristic $p>0$,
and let $B$ be an effective $\Q$-divisor on $X$.
\begin{enumerate}
\item
The pair $(X,B)$ is \textit{globally $F$-regular }if
for every effective Weil divisor $D$ on $X$,
there is a positive integer $e$ such that
the composite map
\[
\mathcal{O}_X \to F^e_*\mathcal{O}_X \hookrightarrow
F^e_*\mathcal{O}_X(\lceil (p^e-1)B\rceil + D)
\]
splits as an $\mathcal{O}_X$-module homomorphism
\cite[Definition 3.1]{SS10}. (Note that
$F^e_*\mathcal{O}_X(Z)$ means $F^e_*(\mathcal{O}_X(Z))$,
for a divisor $Z$.)
\item The pair $(X,B)$ is {\it globally
sharply $F$-split }if there
is a positive integer $e$ such that the composite map
\[
\mathcal{O}_X \to F^e_*\mathcal{O}_X \hookrightarrow
F^e_*\mathcal{O}_X(\lceil (p^e-1)B\rceil)
\]
splits as an $\mathcal{O}_X$-module homomorphism.
(For $B=0$, we omit the word ``sharply''.)
\item The pair $(X,B)$ is {\it strongly $F$-regular}, resp.\
{\it sharply $F$-pure}, if $X$ is covered
by open sets
on which the corresponding global property holds. 
(For $B=0$, we simply say {\it $F$-pure} to mean ``sharply $F$-pure.'')
\end{enumerate}
\end{defn}

To avoid confusion, note that whether
the map $\mathcal{O}_X\to F^e_*\mathcal{O}_X(Z)$ splits
depends on the effective divisor $Z$, not just
on its linear equivalence class.

Let $X$ be a smooth variety over a perfect field $k$
of characteristic $p>0$.
The Frobenius pushforward of the de Rham complex,
\[
F_{*}\Omega^{\bullet}_X : F_{*}\sO_X \overset{F_{*}d}{\to}
F_{*}\Omega_X \overset{F_{*}d}{\to} \cdots ,
\]
is a complex of $\sO_X$-module homomorphisms.
Define locally free $\sO_X$-modules as follows.
\[
\begin{array}{rl}
&B^i_X\coloneqq\Im(F_{*}d : F_{*}\Omega^{i-1}_X
\to F_{*}\Omega^i_X),\\
&Z^i_X\coloneqq\Ker(F_{*}d : F_{*}\Omega^{i}_X
\to F_{*}\Omega^{i+1}_X).\\
\end{array}
\]
By definition, we have an exact sequence
\begin{align}\label{closedfirst F-pure}
0 \to Z_X^{i} \to F_{*}\Omega_X^i \overset{F_{*}d}{\to}
B_X^{i+1}\to 0.
\end{align}
We also have the exact sequence
arising from the Cartier isomorphism
(see \cite[Theorem 1.3.4]{fbook}, for example),
\begin{align}\label{boundaryfirst F-pure}
0 \to B^i_X \to  Z^i_X \overset{C^i}{\to} \Omega^i_X \to 0.
\end{align}

\begin{thm}\label{thm:Bott implies GFR F-pure}
    Let $X$ be a Fano variety over a perfect field
    of characteristic $p>0$. Suppose that $X$ admits
    an int-amplified endomorphism of degree
    prime to $p$.
    \begin{itemize}
        \item[\textup{(1)}] If $X$ is strongly $F$-regular
        (for example, smooth),
        then it is globally $F$-regular.
        \item[\textup{(2)}] If $X$ is $F$-pure,
        then it is globally $F$-split.
    \end{itemize}
\end{thm}

\begin{rem}\label{rem:Fano-examples}
Theorem \ref{thm:Bott implies GFR F-pure} is sharp
in some ways. Consider the projective cone $X\subset \PP^3$
over a smooth cubic curve $C\subset \PP^2$
over an algebraically closed field $k$
of characteristic $p>0$. Then $X$ is a log canonical
Fano surface, and it admits an int-amplified
endomorphism of degree prime to $p$, coming
from a multiplication endomorphism of the
elliptic curve $C$. But $X$ is not strongly $F$-regular,
hence not globally $F$-regular. And if $C$
is supersingular, then $X$ is not $F$-pure, hence
not globally $F$-split. One might ask: is a klt Fano
variety with an int-amplified endomorphism
of degree prime to $p$ always globally $F$-regular?
\end{rem}

\begin{proof} (Theorem \ref{thm:Bott implies GFR F-pure})
    We first prove that global $F$-regularity
    of a Fano variety 
    is equivalent to global $F$-splitting plus
    strong $F$-regularity,
    using the results of Schwede and Smith.
    As a result, statement (2) will imply statement (1).

\begin{lem}\label{lem:integral}
Let $X$ be a normal quasi-projective variety
over a perfect field $k$,
and let $B$ be an effective $\Q$-divisor on $X$.
If the pair $(X,B)$
is globally sharply $F$-split, then there is an effective
$\Q$-divisor $\Delta$
such that $(X,B+\Delta)$ is globally sharply $F$-split, $B+\Delta$
has $\Z_{(p)}$ coefficients, and $K_X+B+\Delta$ is
$\Z_{(p)}$-linearly equivalent to zero.
\end{lem}

This is \cite[Theorem 4.3]{SS10}. They do not mention that
$K_X+B+\Delta$ is $\Z_{(p)}$-linearly equivalent to zero,
but that is what their proof gives (p.~878).

Let $X$ be a strongly $F$-regular Fano variety
that is globally $F$-split.
Then Lemma \ref{lem:integral}
gives an effective $\Z_{(p)}$-divisor $\Delta$ such that
$(X,\Delta)$ is globally $F$-split and $K_X+\Delta$
is $\Z_{(p)}$-linearly equivalent to zero.
By the definition of global $F$-splitting,
there is a positive integer $e$
such that $(p^e-1)\Delta$ has integer coefficients
and the inclusion $\sO_X\to F_*^e\sO_X((p^e-1)\Delta)$
is split. Here $\Delta$ is $\Z_{(p)}$-linearly equivalent
to $-K_X$, which is ample. So $X-\Supp(\Delta)$
is affine and strongly $F$-regular, hence globally
$F$-regular. By \cite[Theorem 3.9]{SS10},
it follows that $X$ is globally $F$-regular.

It remains to prove statement (2). That is, if $X$
is an $F$-pure Fano variety that admits
an int-amplified endomorphism of degree
prime to $p$, we will
show that $X$ is globally $F$-split.
Since $X$ is $F$-pure, the exact sequence
\[
0\to \sO_X\to F_*\sO_X\to F_*\sO_X/\sO_X\to 0
\]
is locally split on $X$. In particular, the sheaf
$F_*\sO_X/\sO_X$ is reflexive, since $F_*\sO_X$ is.
On the smooth locus $U$ of $X$, we have
$0\to \sO_U\to F_*\sO_U\to B^1_U\to 0$. So $F_*\sO_X/\sO_X$
is the double dual $B^{[1]}_X$ of $B^1_X$.

Since the sequence
\[
0\to \sO_X\to F_*\sO_X\to B^{[1]}_X\to 0
\]
is locally split, it corresponds to an element
of $H^1(X,\sHom(B^{[1]}_X, \sO_X))$.
We want to show that $X$ is globally
$F$-split, meaning that this element is zero.
We have a perfect
pairing $B^1_U\times B^d_U\to \omega_U$ on the smooth
locus $U$ of $X$ \cite[proof of Lemma 1.1]{Mehta-Srinivas}.
As a result, the sheaf $\sHom(B^{[1]}_X,\sO_X)$
is the reflexive sheaf $B^{[d]}_X(-K_X)$. So it suffices
to show that $H^1(X,B^{[d]}_X(-K_X))=0$. That follows
from Lemma \ref{lem:closed vanishing F-pure}, below. So $X$ is globally
$F$-split, proving statement (2).
Theorem \ref{thm:Bott implies GFR F-pure}
is proved.
\end{proof}

    \begin{lem}\label{lem:closed vanishing F-pure}
    Let $X$ be a normal projective variety
    over a perfect field of characteristic $p>0$.
    Suppose that $X$ admits an int-amplified endomorphism
    of degree prime to $p$. Then
    \[
    H^i(X,B_X^{[j]}(A))=0
    \]
    and
    \[
    H^i(X,Z_X^{[j]}(A))=0
    \]
    for every $i>0$, $j\geq 0$, and $A$ an ample Weil
    divisor.
    \end{lem}

    \begin{proof}
The proof of Theorem \ref{Introtheo:Bott} works without change
for the reflexive sheaves $B_X^{[j]}$ and $Z_X^{[j]}$
in place of $\Omega_X^{[j]}$.
In more detail, consider the pullback map
$\Omega^{[j]}_X\to f_*\Omega^{[j]}_X$ and
    the trace map $\tau_{f}
    \colon f_*\Omega^{[j]}_X\to \Omega^{[j]}_X$.
Because $f$ commutes with the Frobenius morphism $F$ on $X$,
we also have a pullback map
$F_*\Omega^{[j]}_X\to f_*F_*\Omega^{[j]}_X$ and
a trace map $\tau_{f}
    \colon f_*F_*\Omega^{[j]}_X\to F_*\Omega^{[j]}_X$.
We claim that these two maps preserve the subsheaves
$B_X^{[j]}$ and $Z_X^{[j]}$ of $F_*\Omega^{[j]}_X$;
then the proof
of Theorem \ref{Introtheo:Bott} applies.

Since $B_X^{[j]}$ and $Z_X^{[j]}$ are reflexive sheaves,
it suffices to check this claim
outside $X^{\sing}\cup f(X^{\sing})$. Then the claim follows
from the fact that the pullback and pushforward
of differential forms
commute with the exterior derivative $d$
\cite[Tag 0FLC]{stacks-project}.
\end{proof}

\section*{Acknowledgements}
We wish to express our gratitude to Shou Yoshikawa
for valuable conversations.
We are also grateful to Ekaterina Amerik,
Fabio Bernasconi, Frank Gounelas,
Masaru Nagaoka, Teppei Takamatsu,
De-Qi Zhang, and the referee for useful comments.
Kawakami was supported by JSPS KAKENHI Grant number
JP22KJ1771. Totaro was supported by NSF grant DMS-2054553.

\newcommand{\etalchar}[1]{$^{#1}$}


\begin{thebibliography}{ARVdV99}

\bibitem[Ame97]{Amerik-maps}
Ekaterina Amerik.
\newblock Maps onto certain {F}ano threefolds.
\newblock {\em Doc. Math.}, 2:195--211, 1997.

\bibitem[Ame07]{Amerik-quadric}
Ekaterina Amerik.
\newblock Mappings onto quadrics.
\newblock {\em Mat. Zametki}, 81(4):621--624, 2007.

\bibitem[Art74]{Artin-supersingular}
M.~Artin.
\newblock Supersingular {$K3$} surfaces.
\newblock {\em Ann. Sci. \'{E}cole Norm. Sup. (4)}, 7:543--567 (1975), 1974.

\bibitem[ARVdV99]{ARV}
Ekaterina Amerik, Marat Rovinsky, and Antonius Van~de Ven.
\newblock A boundedness theorem for morphisms between threefolds.
\newblock {\em Ann. Inst. Fourier (Grenoble)}, 49(2):405--415, 1999.

\bibitem[AWZ21]{AWZ}
Piotr Achinger, Jakub Witaszek, and Maciej Zdanowicz.
\newblock Global {F}robenius liftability {I}.
\newblock {\em J. Eur. Math. Soc. (JEMS)}, 23(8):2601--2648, 2021.

\bibitem[AWZ23]{AWZ2}
Piotr Achinger, Jakub Witaszek, and Maciej Zdanowicz.
\newblock Global {F}robenius liftability {II}: surfaces and {F}ano threefolds.
\newblock {\em Ann. Sc. Norm. Super. Pisa Cl. Sci.}, 24:329--366, 2023.

\bibitem[Bea01]{Beauville}
Arnaud Beauville.
\newblock Endomorphisms of hypersurfaces and other manifolds.
\newblock {\em Internat. Math. Res. Notices}, 2001(1):53--58, 2001.

\bibitem[BGI71]{SGA6}
Pierre Berthelot, Alexander Grothendieck, and Luc Illusie.
\newblock {\em Th\'{e}orie des intersections et th\'{e}or\`eme de
  {R}iemann-{R}och (SGA 6)}.
\newblock Lecture Notes in Mathematics, Vol. 225. Springer-Verlag, Berlin-New
  York, 1971.

\bibitem[Bha22]{Bhatt}
Bhargav Bhatt.
\newblock Prismatic {F}-gauges.
\newblock {\em Princeton lecture notes,\\
  https://www.math.ias.edu/\~{}bhatt/teaching/mat549f22/lectures.pdf}, 2022.

\bibitem[BK05]{fbook}
Michel Brion and Shrawan Kumar.
\newblock {\em Frobenius splitting methods in geometry and representation
  theory}, volume 231 of {\em Progress in Mathematics}.
\newblock Birkh\"{a}user Boston, Inc., Boston, MA, 2005.

\bibitem[BTLM97]{BTLM}
Anders Buch, Jesper~F. Thomsen, Niels Lauritzen, and Vikram Mehta.
\newblock The {F}robenius morphism on a toric variety.
\newblock {\em Tohoku Math. J. (2)}, 49(3):355--366, 1997.

\bibitem[CMZ20]{Cascini-Meng-Zhang}
Paolo Cascini, Sheng Meng, and De-Qi Zhang.
\newblock Polarized endomorphisms of normal projective threefolds in arbitrary
  characteristic.
\newblock {\em Math. Ann.}, 378(1-2):637--665, 2020.

\bibitem[dJ96]{deJong}
A.~J. de~Jong.
\newblock Smoothness, semi-stability and alterations.
\newblock {\em Inst. Hautes \'{E}tudes Sci. Publ. Math.}, 83:51--93, 1996.

\bibitem[Eke87]{Ekedahl}
Torsten Ekedahl.
\newblock Foliations and inseparable morphisms.
\newblock In {\em Algebraic geometry, {B}owdoin, 1985 ({B}runswick, {M}aine,
  1985)}, volume~46 of {\em Proc. Sympos. Pure Math.}, pages 139--149. Amer.
  Math. Soc., Providence, RI, 1987.

\bibitem[FGI{\etalchar{+}}05]{FAG}
Barbara Fantechi, Lothar G\"{o}ttsche, Luc Illusie, Steven~L. Kleiman, Nitin
  Nitsure, and Angelo Vistoli.
\newblock {\em Fundamental algebraic geometry}, volume 123 of {\em Mathematical
  Surveys and Monographs}.
\newblock American Mathematical Society, Providence, RI, 2005.

\bibitem[FS20]{Fanelli-unusual}
Andrea Fanelli and Stefan Schr\"{o}er.
\newblock The maximal unipotent finite quotient, unusual torsion in {F}ano
  threefolds, and exceptional {E}nriques surfaces.
\newblock {\em \'{E}pijournal G\'{e}om. Alg\'{e}brique}, 4:Art. 11, 29, 2020.

\bibitem[Fuj83]{Fujita}
Takao Fujita.
\newblock Vanishing theorems for semipositive line bundles.
\newblock In {\em Algebraic geometry ({T}okyo/{K}yoto, 1982)}, volume 1016 of
  {\em Lecture Notes in Math.}, pages 519--528. Springer, Berlin, 1983.

\bibitem[Fuj07]{Fujino-toric}
Osamu Fujino.
\newblock Multiplication maps and vanishing theorems for toric varieties.
\newblock {\em Math. Z.}, 257(3):631--641, 2007.

\bibitem[Gar84]{Garel}
Emmanuelle Garel.
\newblock An extension of the trace map.
\newblock {\em J. Pure Appl. Algebra}, 32(3):301--313, 1984.

\bibitem[GJ18]{Gounelas-Javanpeykar}
Frank Gounelas and Ariyan Javanpeykar.
\newblock Invariants of {F}ano varieties in families.
\newblock {\em Mosc. Math. J.}, 18(2):305--319, 2018.

\bibitem[Har77]{Har}
Robin Hartshorne.
\newblock {\em Algebraic geometry}.
\newblock Springer-Verlag, New York-Heidelberg, 1977.
\newblock Graduate Texts in Mathematics, No. 52.

\bibitem[HM03]{Jun-Muk--Mok}
Jun-Muk Hwang and Ngaiming Mok.
\newblock Finite morphisms onto {F}ano manifolds of {P}icard number 1 which
  have rational curves with trivial normal bundles.
\newblock {\em J. Algebraic Geom.}, 12(4):627--651, 2003.

\bibitem[IP99]{Algebraic-Geometry-V}
V.~A. Iskovskikh and Yu.~G. Prokhorov.
\newblock Fano varieties.
\newblock In {\em Algebraic geometry, {V}}, volume~47 of {\em Encyclopaedia
  Math. Sci.}, pages 1--247. Springer, Berlin, 1999.

\bibitem[Kah06]{Kahn}
Bruno Kahn.
\newblock Sur le groupe des classes d'un sch\'{e}ma arithm\'{e}tique.
\newblock {\em Bull. Soc. Math. France}, 134(3):395--415, 2006.
\newblock With an appendix by Marc Hindry.

\bibitem[Kaw21]{Kaw2}
Tatsuro Kawakami.
\newblock On {K}awamata-{V}iehweg type vanishing for three dimensional {M}ori
  fiber spaces in positive characteristic.
\newblock {\em Trans. Amer. Math. Soc.}, 374(8):5697--5717, 2021.

\bibitem[Kel22]{Keller}
Timo Keller.
\newblock On the {$p$}-torsion of the {T}ate-{S}hafarevich group of abelian
  varieties over higher dimensional bases over finite fields.
\newblock {\em J. Th\'{e}or. Nombres Bordeaux}, 34(2):497--513, 2022.

\bibitem[Kle05]{Kleiman}
Steven~L. Kleiman.
\newblock The {P}icard scheme.
\newblock In {\em Fundamental algebraic geometry}, volume 123 of {\em Math.
  Surveys Monogr.}, pages 235--321. Amer. Math. Soc., Providence, RI, 2005.

\bibitem[Kod86]{Kodaira-book}
Kunihiko Kodaira.
\newblock {\em Complex manifolds and deformation of complex structures}, volume
  283 of {\em Grundlehren der mathematischen Wissenschaften}.
\newblock Springer-Verlag, New York, 1986.
\newblock Translated from the Japanese by Kazuo Akao. With an appendix by
  Daisuke Fujiwara.

\bibitem[Kol13]{Kol13}
J\'{a}nos Koll\'{a}r.
\newblock {\em Singularities of the minimal model program}, volume 200 of {\em
  Cambridge Tracts in Mathematics}.
\newblock Cambridge University Press, Cambridge, 2013.
\newblock With a collaboration of S\'{a}ndor Kov\'{a}cs.

\bibitem[Kon89]{Konno}
Kazuhiro Konno.
\newblock Generic {T}orelli theorem for hypersurfaces of certain compact
  homogeneous {K}\"{a}hler manifolds.
\newblock {\em Duke Math. J.}, 59(1):83--160, 1989.

\bibitem[KP23]{Kuznetsov-Prokhorov}
A.~G. Kuznetsov and Yu.~G. Prokhorov.
\newblock On higher-dimensional del {P}ezzo varieties.
\newblock {\em Izv. Ross. Akad. Nauk Ser. Mat.}, 87(3):75--148, 2023.

\bibitem[Kun86]{Kunz}
Ernst Kunz.
\newblock {\em K\"{a}hler differentials}.
\newblock Advanced Lectures in Mathematics. Friedr. Vieweg \& Sohn,
  Braunschweig, 1986.

\bibitem[Laz84]{Lazarsfeld1983}
Robert Lazarsfeld.
\newblock Some applications of the theory of positive vector bundles.
\newblock In {\em Complete intersections ({A}cireale, 1983)}, volume 1092 of
  {\em Lecture Notes in Math.}, pages 29--61. Springer, Berlin, 1984.

\bibitem[Laz04]{LazarsfeldII}
Robert Lazarsfeld.
\newblock {\em Positivity in algebraic geometry. {II}}, volume~49 of {\em
  Ergebnisse der Mathematik und ihrer Grenzgebiete. 3. Folge. A Series of
  Modern Surveys in Mathematics [Results in Mathematics and Related Areas. 3rd
  Series. A Series of Modern Surveys in Mathematics]}.
\newblock Springer-Verlag, Berlin, 2004.
\newblock Positivity for vector bundles, and multiplier ideals.

\bibitem[Meg98]{Megyesi}
G.~Megyesi.
\newblock Fano threefolds in positive characteristic.
\newblock {\em J. Algebraic Geom.}, 7(2):207--218, 1998.

\bibitem[Men20]{Meng-building}
Sheng Meng.
\newblock Building blocks of amplified endomorphisms of normal projective
  varieties.
\newblock {\em Math. Z.}, 294(3-4):1727--1747, 2020.

\bibitem[Mil80]{Milne-etale}
James~S. Milne.
\newblock {\em \'{E}tale cohomology}.
\newblock Princeton Mathematical Series, No. 33. Princeton University Press,
  Princeton, N.J., 1980.

\bibitem[MP12]{MP12}
Davesh Maulik and Bjorn Poonen.
\newblock N\'{e}ron-{S}everi groups under specialization.
\newblock {\em Duke Math. J.}, 161(11):2167--2206, 2012.

\bibitem[MS87]{Mehta-Srinivas}
V.~B. Mehta and V.~Srinivas.
\newblock Varieties in positive characteristic with trivial tangent bundle.
\newblock {\em Compositio Math.}, 64(2):191--212, 1987.
\newblock With an appendix by Srinivas and M. V. Nori.

\bibitem[Muk89]{Mukai-Fano}
Shigeru Mukai.
\newblock Biregular classification of {F}ano {$3$}-folds and {F}ano manifolds
  of coindex {$3$}.
\newblock {\em Proc. Nat. Acad. Sci. U.S.A.}, 86(9):3000--3002, 1989.

\bibitem[MZ20]{Meng-Zhang-normal}
Sheng Meng and De-Qi Zhang.
\newblock Normal projective varieties admitting polarized or int-amplified
  endomorphisms.
\newblock {\em Acta Math. Vietnam.}, 45(1):11--26, 2020.

\bibitem[MZZ22]{Meng-Zhang-Zhong}
Sheng Meng, De-Qi Zhang, and Guolei Zhong.
\newblock Non-isomorphic endomorphisms of {F}ano threefolds.
\newblock {\em Math. Ann.}, 383(3-4):1567--1596, 2022.

\bibitem[Nak10]{Nakayama(endomorphism)}
Noboru Nakayama.
\newblock Separable endomorphisms of surfaces in positive characteristic.
\newblock In {\em Algebraic geometry in {E}ast {A}sia---{S}eoul 2008},
  volume~60 of {\em Adv. Stud. Pure Math.}, pages 301--330. Math. Soc. Japan,
  Tokyo, 2010.

\bibitem[OW02]{Occhetta-Wisniewski}
Gianluca Occhetta and Jaros\l aw~A. Wi\'{s}niewski.
\newblock On {E}uler-{J}aczewski sequence and {R}emmert-van de {V}en problem
  for toric varieties.
\newblock {\em Math. Z.}, 241(1):35--44, 2002.

\bibitem[PS89]{Paranjape-Srinivas}
K.~H. Paranjape and V.~Srinivas.
\newblock Self-maps of homogeneous spaces.
\newblock {\em Invent. Math.}, 98(2):425--444, 1989.

\bibitem[SB97]{SB97}
N.~I. Shepherd-Barron.
\newblock Fano threefolds in positive characteristic.
\newblock {\em Compositio Math.}, 105(3):237--265, 1997.

\bibitem[Sno86]{Snow-Gr}
Dennis~M. Snow.
\newblock Cohomology of twisted holomorphic forms on {G}rassmann manifolds and
  quadric hypersurfaces.
\newblock {\em Math. Ann.}, 276(1):159--176, 1986.

\bibitem[SPA23]{stacks-project}
The Stacks Project~Authors.
\newblock The {S}tacks {P}roject.
\newblock \url{https://stacks.math.columbia.edu/}, 2023.

\bibitem[SS10]{SS10}
Karl Schwede and Karen~E. Smith.
\newblock Globally {$F$}-regular and log {F}ano varieties.
\newblock {\em Adv. Math.}, 224(3):863--894, 2010.

\bibitem[SZ24]{Shao-Zhong}
Feng Shao and Guolei Zhong.
\newblock Boundedness of finite morphisms onto {F}ano manifolds with large
  {F}ano index.
\newblock {\em J. Algebra}, 639:678--707, 2024.

\bibitem[Tan22]{Tanaka-kv}
Hiromu Tanaka.
\newblock Kawamata-{V}iehweg vanishing for toric varieties.
\newblock {\em arXiv preprint arXiv:2208.09680}, 2022.

\bibitem[Tan23]{Tanaka-fano4}
Hiromu Tanaka.
\newblock Fano threefolds in positive characteristic {IV}.
\newblock {\em arXiv preprint arXiv:2308.08127}, 2023.

\bibitem[Tot23a]{Totaro}
Burt Totaro.
\newblock Bott vanishing for {F}ano $3$-folds.
\newblock {\em arXiv preprint arXiv:2302.08142, to appear in Math. Z.}, 2023.

\bibitem[Tot23b]{Totaro-endo}
Burt Totaro.
\newblock Endomorphisms of {F}ano 3-folds and log {B}ott vanishing.
\newblock {\em arXiv preprint arXiv:2305.18660}, 2023.

\bibitem[Wil87]{Wilson}
P.~M.~H. Wilson.
\newblock Fano fourfolds of index greater than one.
\newblock {\em J. Reine Angew. Math.}, 379:172--181, 1987.

\end{thebibliography}

\bigskip

\end{document}